\title{Rigidity properties of holomorphic Legendrian singularities}
\author{Jun-Muk Hwang}
\institution{Korea Institute for Advanced Study, Hoegiro 87, Seoul 02455, Korea}\\
\email{jmhwang@kias.re.kr}}
\date{\vspace{-5ex}} % Empty date or tweak it according to your needs
\journal{\'Epijournal de G\'eom\'etrie Alg\'ebrique} % Epijournal name
\newdimen\origiwspc
\font% original inter word space
\numberwithin{equation}{section}
\renewcommand{\p@equation}{\arabic{section}.\arabic{equation}\expandafter\@gobble}
\newtheorem{thm}{Theorem}[section]
\newtheorem{theorem}[thm]{Theorem}
\newtheorem{lemma}[thm]{Lemma}
\newtheorem{proposition}[thm]{Proposition}
\newtheorem{notation}[thm]{Notation}
\newtheorem{definition}[thm]{Definition}
\newtheorem{example}[thm]{Example}
\newcommand{\sD}{{\mathcal D}}
\newcommand{\sI}{{\mathcal I}}
\newcommand{\sL}{{\mathcal L}}
\newcommand{\sO}{{\mathcal O}}
\newcommand{\sS}{{\mathcal S}}
\newcommand{\sY}{{\mathcal Y}}
\newcommand{\C}{{\mathbb C}}
\newcommand{\BP}{{\mathbb P}}
\newcommand{\fsl}{{\mathfrak s}{\mathfrak l}}
\newcommand{\fsp}{{\mathfrak s}{\mathfrak p}}
\newcommand{\cont}{{\mathfrak c}{\mathfrak o}{\mathfrak n}{\mathfrak t}}
\def\Sym{\mathop{\rm Sym}\nolimits}
\def\Hom{\mathop{\rm Hom}\nolimits}
\begin{document}

%%%%%%%%%%%%%%%%%%%%%%%%%%%%%%%
% Add the title to the document
%%%%%%%%%%%%%%%%%%%%%%%%%%%%%%%

\maketitle

%\contribution{}

%%%%%%%%%%%%%%%%%%%%%
% Dedication (if any)
%%%%%%%%%%%%%%%%%%%%%
%\dedication{}

%%%%%%%%%%%%%%%%%%%%%%%%%%%%%%%%%%%%%%%%%%%%%%%%%%%%%%%%%%
% Add abstract, Keywords, MSC classification (recommended)
% Never remove prelims section, make it rather empty
%%%%%%%%%%%%%%%%%%%%%%%%%%%%%%%%%%%%%%%%%%%%%%%%%%%%%%%%%%
\begin{prelims}

%\vspace{-0.55cm}

\def\abstractname{Abstract}
\abstract{We study the singularities of Legendrian subvarieties of contact manifolds in the complex-analytic category and prove two rigidity results. The first one is that Legendrian singularities with reduced tangent cones are contactomorphically biholomorphic to their tangent cones.  This result is partly motivated by a problem on Fano contact manifolds. The second result is the deformation-rigidity of normal Legendrian singularities, meaning that any holomorphic family  of normal Legendrian singularities is trivial, up to contactomorphic biholomorphisms of germs. Both results are proved by exploiting the relation between infinitesimal contactomorphisms and holomorphic sections of the natural line bundle on the contact manifold.}

\keywords{Legendrian singularities, Lagrangian cone, tangent cone}

\MSCclass{58K40, 58K60, 53D10, 14B07}

\vspace{0.15cm}

\languagesection{Fran\c{c}ais}{%

\vspace{-0.05cm}
{\bf Titre. Propri\'et\'es de rigidit\'e des singularit\'es legendriennes holomorphes} \commentskip {\bf R\'esum\'e.} Nous \'etudions les singularit\'es des sous-vari\'et\'es legendriennes des vari\'et\'es de contact dans la cat\'egorie analytique complexe et montrons deux r\'esultats de rigidit\'e. Le premier affirme que les singularit\'es legendriennes ayant un c\^one tangent r\'eduit sont contactomorphiquement biholomorphes \`a ce dernier. Ce r\'esultat est en partie motiv\'e par un probl\`eme concernant les vari\'et\'es de contact qui sont \'egalement de Fano. Le second r\'esultat consiste en la rigidit\'e par d\'eformation des singularit\'es legendriennes normales, ceci signifiant que toute famille holomorphe de singularit\'es legendriennes normales est triviale, \`a un germe de biholomorphisme contactomorphe pr\`es. Ces deux r\'esultats sont d\'emontr\'es en exploitant la relation entre les contactomorphismes infinit\'esimaux et les sections holomorphes d'un fibr\'e en droites naturel sur la vari\'et\'e de contact.}

\end{prelims}

%%%%%%%%%%%%%%%%%%%%%
% Content begins here
%%%%%%%%%%%%%%%%%%%%%

\newpage

% Add table of contents (optional)
\setcounter{tocdepth}{1} \tableofcontents

\section{Introduction} 

Singularities of Legendrian varieties in contact manifolds have been studied  in singularity theory or symplectic/contact geometry, often in differentiable or real analytic categories. In this article, we study them in the complex-analytic category. Methods of algebraic geometry can be applied more efficiently to holomorphic Legendrian singularities. Using this approach, we present two rigidity results on Legendrian singularities.  Let us start with precise definitions of the terms we  use.

\begin{definition}\label{d.contact} {\rm
Let $M$ be a complex manifold of  dimension $2m+1$ for a positive integer $m$. A subbundle $D \subset T_M$ of rank $2m$   in the holomorphic tangent bundle $T_M$ is called a {\em contact structure } on $M$ if the Frobenius bracket homomorphism
$\wedge^2 D \to T_M/D$ is nondegenerate at every point of $M$. A complex manifold $M$ equipped with a contact structure $D$ is called a {\em contact manifold}.
A biholomorphic map $\varphi: M_1 \to M_2$ between two contact manifolds $(M_1, D_1)$ and $(M_2, D_2)$ is {\em contactomorphic} (equivalently, a {\em contactomorphism})
if ${\rm d} \varphi (D_1) = D_2$.
%The quotient line bundle $L := T_M/D$ is called the {\em contact line bundle} and  the quotient homomorphism $\theta: T_M \to L$ is called the {\em contact form}, which is an $L$-valued 1-form on $M$.
}
\end{definition}

\begin{definition}\label{d.Legendre} {\rm
Let $(M,D)$ be a contact manifold of dimension $2m+1$.  An analytic subvariety $Z \subset M$ is {\em Legendrian} if $\dim Z = m$ and $T_{Z,x} \subset D_x$ for each nonsingular point $x$ of $Z$. This implies that $T_{Z,x} \subset D_x$ is isotropic with respect to the bracket $\wedge^2 D_x \to T_{M,x}/D_x$. A nonsingular Legendrian subvariety is called a {\em Legendrian submanifold}. The germ of a point on a Legendrian subvariety $x \in Z$ in a contact manifold $(M,D)$ is called a {\em Legendrian singularity}. Two Legendrian singularities $x_1 \in Z_1 \subset (M_1, D_1)$
and $x_2 \in Z_2 \subset (M_2, D_2)$, where $Z_i$ is a Legendrian subvariety in a contact manifold $(M_i, D_i),\, i=1,2,$ are {\em contactomorphic}, if there exist  open neighborhoods
$U_i \subset M_i$ of $x_i,\, i=1,2$, and a contactomorphism $\varphi: U_1 \to U_2$ such that
$$\varphi(x_1)= x_2 \ \mbox{ and }  \varphi(Z_1 \cap U_1) = Z_2 \cap U_2.$$ }\end{definition}

Are there many interesting examples of Legendrian singularities? The following construction provides lots of them.

\begin{example}{\rm
For a complex manifold $X$, the projectivized cotangent bundle $M= \BP T^*_{X}$ has a natural contact structure (see e.g. Example 1.2 B of \cite{AG} or Example 2.2 of \cite{LB2}). For any complex analytic subvariety $Y \subset X$, its conormal variety
$Z_Y \subset M$, the closure of the projectivized conormal bundle of
the smooth locus of $Y$, is a Legendrian subvariety. The conormal variety $Z_Y$ is usually (but not always) singular when $Y$ is singular. When $Y \subset X$ is a hypersurface, the conormal variety $Z_Y$ is the  Nash blowup of $Y$. } \end{example}

In Section \ref{s.cone}, we give another class of examples of Legendrian singularities,
those arising from Lagrangian cones.

 Our first rigidity result is in terms of tangent cones.
Recall (see Chapter 3, Section 3 of \cite{Mu}) that for an analytic subvariety $Z$  of a complex manifold $M$ and a point $x \in Z$,  if $I \subset  \sO_{M,x}$ is the ideal of the germ of $Z$ at $x$, then the tangent cone $TC_{Z,x}$ is the  subscheme of the Zariski tangent space $T_{Z,x}$ defined by the ideal generated by lowest-order terms of the Taylor expansions of  elements of $I$ at $x$. Roughly speaking, the tangent cone of a singular variety is the lowest order approximation of the singularity. It seldom determines the singularity. Remarkably, a Legendrian singularity is determined by the tangent cone, if the tangent cone is reduced.

\begin{theorem}\label{t.cone}
 A Legendrian singularity  is contactomorphic to the germ at the origin of its tangent cone  if the tangent cone is reduced. More precisely, a Legendrian singularity is a Lagrangian cone singularity (in the sense of Definition \ref{d.LagCone}) if  and only if its
tangent cone is reduced. \end{theorem}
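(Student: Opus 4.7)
The ``only if'' direction is immediate, since a reduced Lagrangian cone equals its tangent cone at the apex. For the converse direction I would work in Darboux coordinates $(q_1,\ldots,q_m,p_1,\ldots,p_m,z)$ centred at $x$ with $\theta = dz - \sum p_i\,dq_i$, and use the weighted contact $\mathbb{C}^*$-action $\sigma_t(q,p,z) = (tq, tp, t^2 z)$ on $T_{M,x} \cong \mathbb{C}^{2m+1}$, whose infinitesimal generator is the Euler contact vector field
\[
\Lambda \;=\; \sum q_i \partial_{q_i} + \sum p_i \partial_{p_i} + 2 z \partial_z
\]
with contact Hamiltonian $f_0 = \theta(\Lambda) = 2z - \sum p_i q_i$. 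In these coordinates the tangent cone $C := TC_{Z,x}$ is a $\sigma_t$-invariant Legendrian cone. The goal is a contactomorphism $\Phi$ of a neighbourhood of $x$ with $\Phi(Z) = C$ as germs.

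The main step is to construct a holomorphic contact vector field $V$ near $x$, with $V(x) = 0$, tangent to $Z$, and whose leading $\Lambda$-weight component equals $\Lambda$ itself. I would use the bijection $f \mapsto X_f$ induced by $\theta$ between contact Hamiltonians (sections of $L = T_M/D$) and contact vector fields. Two facts drive the argument: (i) $X_f$ is tangent to the reduced Legendrian $Z$ if and only if $f \in I(Z)$ -- the Lagrangian-tangency in $D$, given $f(x) = 0$, reduces to the vanishing of $f$ on $Z$; and (ii) the $\Lambda$-weight-zero part of $X_f$ equals $\Lambda$ exactly when the weight-$2$ component of $f$ equals $f_0$. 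Applying (i) to $C$, with $\Lambda$ tangent to $C$ by $\sigma_t$-invariance, gives $f_0 \in I(C)$. The reducedness of $C$ gives $I(C) = \mathrm{in}(I(Z))$ in the weighted filtration, so $f_0$, being weight-$2$ homogeneous, lifts to some $f \in I(Z)$ with $f - f_0$ of weight $\geq 3$; set $V := X_f$.

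The last step is to linearise $V$ to $\Lambda$ via a contactomorphism $\Phi$ tangent to the identity at $x$. Decomposing $V = \Lambda + \sum_{w \geq 1} V_w$ in weight components, at each $w \geq 1$ the cohomological equation $[\Lambda, X_g] = V_w$ becomes $w X_g = V_w$, solvable with $X_g$ a contact vector field of weight $w$; the time-$1$ flow of $-X_g/w$ then eliminates the weight-$w$ term. Since the eigenvalues $\{w \geq 1\}$ of $\mathrm{ad}_\Lambda$ lie strictly in a half-plane, the iteration converges analytically in the Poincar\'e situation, producing a contactomorphism $\Phi$ with $\Phi^{\ast} V = \Lambda$. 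Then $\Phi^{-1}(Z)$ is $\sigma_t$-invariant, hence a Legendrian cone, and as $\Phi$ is tangent to the identity it has the same tangent cone $C$ as $Z$; therefore $\Phi^{-1}(Z) = C$.

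The main obstacle is the convergence of the iterative linearisation, which requires Poincar\'e-type analytic estimates in the weighted grading. The reducedness of the tangent cone is used essentially in the preceding step, as it enables the lift of the weight-$2$ Hamiltonian $f_0$ into $I(Z)$.
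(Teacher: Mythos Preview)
Your overall strategy---build a contact vector field tangent to $Z$ whose linear part is the weighted Euler field $\Lambda$, then linearize---is exactly the mechanism behind the paper's proof. But the lift step contains a real gap.

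You assert that reducedness of $C=TC_{Z,x}$ gives $I(C)=\mathrm{in}(I(Z))$ \emph{in the weighted filtration}. What reducedness actually gives is $I(C)=\mathrm{in}_{\mathrm{std}}(I(Z))$ for the \emph{standard} filtration, since the tangent cone is by definition the standard degeneration. Because $z$ carries weight $2$ but standard degree $1$, the two initial ideals can differ. Concretely: lifting the degree-$1$ element $z\in I(C)$ to $I(Z)$ produces $g=z-\phi(p,q)+(\text{higher})$ with $\phi$ quadratic in $p,q$; the \emph{weighted} initial form of $g$ is $z-\phi$, not $z$. So when you try to lift $f_0=2z-\sum p_iq_i$, the weight-$2$ part of your candidate $f$ is $2z-\sum p_iq_i-2\phi$, and you still need $\phi\in I(C)$ to absorb it. Nothing you have written forces $\phi$ to vanish on $C$.

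This is precisely the step the paper works hardest on. The paper first lifts $x_{2m+1}$ to obtain a smooth hypersurface $\Gamma=\{x_{2m+1}=q+h\}\supset Z$ (your $\phi$ is the paper's $q$), then computes that the contact vector field $v^f$ with $f=q+h-x_{2m+1}$, restricted to the exceptional divisor of $\mathrm{Bl}_0\Gamma$, is the linear vector field $\omega^{\dagger}(q)$ on $\BP D_0$. Since $v^f$ is tangent to $Z$, this restriction is tangent to the Legendrian $\BP TC_{Z,0}$, and then Buczy\'nski's correspondence (Proposition~\ref{p.QL}) between quadrics vanishing on a projective Legendrian and $\fsp$-symmetries tangent to it forces $q\in I(C)$. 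Only after this second lift does one get $Z$ inside a hypersurface $x_{2m+1}=h'$ with $h'\in{\bf m}_0^3$; the paper then linearizes the restricted field on this hypersurface by ordinary Poincar\'e (all eigenvalues $1$) and finishes with relative Darboux, rather than attempting a contact Poincar\'e theorem in $\C^{2m+1}$ as you propose. Your convergence concern in the last step is thus bypassed entirely. You also implicitly use that $C\subset D_0$ and that $C$ is Legendrian (equivalently, Lagrangian in $D_0$); these require separate arguments (the paper's Propositions~\ref{p.pTanCone} and~\ref{p.TanCone}).
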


This is proved in Section \ref{s.reduced}. Of course, the reducedness of the tangent cone is a strong requirement. There are many examples of Legendrian singularities with non-reduced tangent cones: for instance, cuspidal Legendrian curves  discussed in Section 4 of \cite{Zh}. 
One motivation for Theorem \ref{t.cone} comes from the study of Fano contact manifolds.
In his investigation \cite{Ke} of Fano contact manifolds, Kebekus studied a certain Legendrian singularity $x \in Z$ ($ Z= {\rm locus}(H_x)$ in the notation of \cite{Ke}). He showed that the projectivized tangent cone $\BP TC_{Z,x}$ is nonsingular and asserted that the singularity $z \in Z$  is biholomorphic to the germ of a Lagrangian cone at $0$. We believe that the latter assertion, if it is true, would have  significant consequences in the study of Fano contact manifolds. But its proof given  in Section 6.1 of \cite{Ke} had a gap.  Theorem \ref{t.cone} has grown out of our attempt to remedy this gap. But it is not strong enough to fix it, as the smoothness of $\BP TC_{Z,x}$ does not imply that $TC_{Z,x}$ is reduced. A  technical difficulty here arises from the fact that $Z$ is (a priori) not normal.

In fact,  Legendrian singularities are  usually not normal and their normalizations cannot be realized as Legendrian singularities.  This can be seen from the following result on the deformation-rigidity of normal Legendrian singularities, which is proved in Section \ref{s.normal}.

\begin{theorem}\label{t.normal}
Let $\Delta$ be a neighborhood of the origin $0$ in $\C$.
Let $(M,D)$ be a contact manifold and consider $\{Z_t \subset M, t \in \Delta\}$ a holomorphic family of Legendrian subvarieties parametrized by $\Delta$. Assume that $Z_t$ is normal for
every $t \in \Delta$.
Then for any $x_0 \in Z_0$, there exist a neighborhood $0 \in \Delta' \subset \Delta$ and a holomorphic arc  $$\{ x_t \in Z_t, t \in \Delta'\}$$ such that the Legendrian singularities $x_0 \in Z_0 \subset M$ and $x_t \in Z_t \subset M$ are contactomorphic for each $t \in \Delta'$.
\end{theorem}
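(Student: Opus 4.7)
The plan is to trivialize the family by integrating a time-dependent contact vector field on a neighborhood of $x_0$. The crucial tool, already central to Theorem~\ref{t.cone}, is the classical bijection between holomorphic contact vector fields on $(M,D)$ and holomorphic sections of the natural line bundle $L := T_M/D$. Under this bijection, for a smooth Legendrian $Z \subset (M,D)$ the Legendrian deformations of $Z$ are parametrized by sections of $L|_Z$: the projection $N_{Z/M} \twoheadrightarrow L|_Z$ coming from the exact sequence $0 \to D|_Z/T_Z \to N_{Z/M} \to L|_Z \to 0$ is an isomorphism when restricted to the subspace of Legendrian infinitesimal deformations. I want to use this to lift the Kodaira--Spencer class of the family $\{Z_t\}$ to a contact vector field on $M$.

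Realize the family as a subvariety $\sZ \subset \Delta \times M$, fix $x_0 \in Z_0$, and proceed in three local steps. First, for each smooth point $z \in Z_0^{\mathrm{sm}}$ near which $\sZ \to \Delta$ is also smooth, the $t$-derivative of any local holomorphic section of $\sZ \to \Delta$ through $(0,z)$ projects to a well-defined element of $N_{Z_0/M,z}$, and its further projection to $L_z$ assembles into a holomorphic section $s_0$ of $L|_{Z_0^{\mathrm{sm}}}$ -- the ``contact Hamiltonian'' of the deformation. Second, after trivializing $L$ near $x_0$ this $s_0$ becomes a locally bounded holomorphic function on $Z_0^{\mathrm{sm}}$, so by the Riemann extension theorem for normal complex spaces it extends to a section of $L|_{Z_0}$ on the whole germ $(Z_0, x_0)$: this is the essential use of the normality of $Z_0$. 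Third, since the surjection $\sO_{M,x_0} \twoheadrightarrow \sO_{Z_0,x_0}$ lifts sections of the locally-free sheaf $L$, the extended $s_0$ lifts to some $\tilde s_0 \in H^0(V, L)$ on a neighborhood $V$ of $x_0$ in $M$, and the associated contact vector field $\xi_0$ on $V$ moves $Z_0$ infinitesimally in the direction in which the family is moving at $t=0$.

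Performing this construction holomorphically in the parameter $t$ -- for example by running the same three steps on $\sZ$ itself, which is normal (being flat over the smooth curve $\Delta$ with normal fibres by hypothesis) -- produces a time-dependent contact vector field $\xi_t$ on a common neighborhood $V \ni x_0$ for $t$ in a smaller disc $\Delta'$. Integrating the non-autonomous vector field $\partial_t + \xi_t$ on $\Delta' \times V$ (shrinking if necessary so the flow exists) yields a holomorphic family of local contactomorphisms $\varphi_t:(M,x_0) \to (M,\varphi_t(x_0))$ with $\varphi_0 = \mathrm{id}$ and $\varphi_t(Z_0) = Z_t$ as germs; setting $x_t := \varphi_t(x_0)$ gives the required arc. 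The main obstacle, and the step that genuinely requires normality of \emph{every} $Z_t$ rather than just of $Z_0$, is the uniform-in-$t$ extension of the Hamiltonian across the moving singular loci $\mathrm{Sing}(Z_t)$: without this, $s_t$ might blow up along those loci, the resulting $\xi_t$ would fail to be holomorphic on a common neighborhood of $x_0$, and the family of contactomorphisms could not be globalized in $t$ -- paralleling the remark preceding the theorem that the normalizations of Legendrian singularities generally cannot themselves be realized Legendrianly.
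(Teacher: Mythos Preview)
Your proposal is correct and follows essentially the same strategy as the paper: project the infinitesimal deformation of the smooth locus to a section of $L|_{Z_t^{\mathrm{sm}}}$, extend across the singular locus by normality, lift to $L$ on the ambient manifold, convert to a time-dependent infinitesimal contactomorphism, and integrate. The paper differs only in packaging: it passes to a Stein neighborhood so that the lift from $Z_t$ to $M$ is obtained from $H^1$-vanishing rather than from the germ-level surjection you use, and it phrases the key injectivity statement (your ``$N_{Z/M}\to L|_Z$ restricts to an isomorphism on Legendrian first-order deformations'') via the identification $N_S \cong J^1_S(L|_S)$ and the fact that a Legendrian section of a $1$-jet bundle is the $1$-jet of its underlying section, which is exactly what is needed to verify that the contact vector field you build really does recover $\dot S_t$ in $N_{S_t}$ and hence that the flow carries $Z_0$ into $Z_t$. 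Your device of extending on the total space $\sZ$ (normal because flat over a smooth curve with normal fibres) is a neat way to get holomorphic dependence on $t$; the paper instead extends fibrewise and relies on the holomorphicity of the construction.
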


Theorem \ref{t.normal} suggests that it might be possible to classify normal Legendrian singularities to some extent. In fact, normal Legendrian singularities are not easy to find.  Some normal Legendrian singularities are described in Example \ref{e.sub}.

Normal Legendrian singularities are interesting from another viewpoint.
The following theorem says that a Legendrian singularity, unless it is nonsingular,  has nonzero torsion differentials.
This is a special case of a stronger result, Theorem 2.5 in \cite{Zh}. For the reader's convenience, we give an elementary proof (different from the one in \cite{Zh}) at the end of Section \ref{s.torsion}.

\begin{theorem}\label{t.torsion}
For a Legendrian singularity $x \in Z \subset (M,D),$ let $\theta$ be a germ of 1-form at $x \in M$ defining $D$. Then $x$ is a nonsingular point of $Z$  if and only if
$\theta|_Z$ is zero in the space of K\"ahler differentials $\Omega_{Z,x}.$
 \end{theorem}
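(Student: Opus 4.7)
The ``only if'' direction is immediate and not the interesting content. If $x$ is a smooth point of the Legendrian subvariety $Z$, then in a neighborhood of $x$ the subvariety $Z$ is a Legendrian submanifold, so its holomorphic tangent bundle lies in $D = \ker\theta$. Hence the pullback of $\theta$ under the inclusion $i\colon Z \hookrightarrow M$ vanishes identically as a holomorphic $1$-form, and so also as a section of $\Omega_{Z,x}$.

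For the converse, the plan is to show that the hypothesis forces the Zariski tangent space $T^{\mathrm{Zar}}_{Z,x}$ to be an isotropic subspace of the symplectic vector space $(D_x, \mathrm{d}\theta|_{D_x})$, and then use the dimension bound for isotropic subspaces to conclude nonsingularity. I would carry this out in four short steps.

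First, I would use that the Zariski cotangent space $\fm_x/\fm_x^2$ of $Z$ at $x$ maps naturally onto $\Omega_{Z,x} \otimes k(x)$ (in fact isomorphically, since we are in characteristic zero), so that vanishing of $i^*\theta$ in $\Omega_{Z,x}$ entails vanishing on $T^{\mathrm{Zar}}_{Z,x}$; hence
$$T^{\mathrm{Zar}}_{Z,x} \;\subset\; \ker \theta_x \;=\; D_x.$$
Second, since the exterior derivative is a well-defined map on Kähler differentials and commutes with pullback, from $i^*\theta = 0$ in $\Omega^1_{Z,x}$ I would deduce $i^*\mathrm{d}\theta = \mathrm{d}(i^*\theta) = 0$ in $\Omega^2_{Z,x}$; pairing against $T^{\mathrm{Zar}}_{Z,x} \times T^{\mathrm{Zar}}_{Z,x}$ shows that this subspace is isotropic for the symplectic form $\mathrm{d}\theta|_{D_x}$.

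Third, by the standard linear-algebra bound an isotropic subspace of the $2m$-dimensional symplectic space $(D_x, \mathrm{d}\theta|_{D_x})$ has dimension at most $m$, so
$$\dim T^{\mathrm{Zar}}_{Z,x} \;\leq\; m \;=\; \dim Z.$$
Since the reverse inequality $\dim T^{\mathrm{Zar}}_{Z,x} \geq \dim_x Z$ holds for any analytic variety, we obtain equality, which means $x$ is a nonsingular point of $Z$.

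The only point that requires any care, and which I expect to be the main (though mild) obstacle, is the first step: one must be precise about the passage from vanishing in $\Omega_{Z,x}$ (a module statement) to the pointwise/Zariski-tangent-space statement, and verify that $\mathrm{d}$ on Kähler differentials is compatible with the usual exterior derivative on $M$ via $i^*$. Everything else is then elementary symplectic linear algebra together with the fact that the Legendrian condition fixes the dimension of $Z$ to be $m$.
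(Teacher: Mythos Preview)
Your argument is correct and takes a genuinely different route from the paper's proof.

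The paper proceeds in two stages. First it uses the Cauchy characteristic foliation (Proposition~\ref{p.product}) to reduce, by induction on $m$, to the situation where $D_0 \subset T_{Z,0}$; this already forces $\dim T_{Z,0} \geq 2m$, so $Z$ is singular at $0$. Second, in that situation (Theorem~\ref{t.torsion2}) it introduces a weighted grading on $\sO_{\C^{2m+1},0}$ and $\Omega_{\C^{2m+1},0}$ with ${\rm wt}(x_i)=1$ for $i\leq 2m$ and ${\rm wt}(x_{2m+1})=2$, and shows by a leading-term argument that $\theta$ cannot lie in $\sO\,{\rm d}I + I\cdot\Omega$ because its weight-$2$ part is not ${\rm d}$-exact.

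Your approach bypasses both the inductive reduction and the weight argument: you observe that the algebraic de~Rham differential $d\colon \Omega^1_{Z,x}\to\Omega^2_{Z,x}$ is well-defined and functorial, so $i^*\theta=0$ forces $i^*{\rm d}\theta=0$; evaluating at $x$ via $\Omega^p_{Z,x}\otimes k(x)\cong \wedge^p(T^{\mathrm{Zar}}_{Z,x})^*$ then shows simultaneously that $T^{\mathrm{Zar}}_{Z,x}\subset D_x$ and that it is isotropic for ${\rm d}\theta|_{D_x}$, whence $\dim T^{\mathrm{Zar}}_{Z,x}\leq m$. This is shorter and more conceptual, and it extracts exactly the pointwise information needed from the vanishing hypothesis. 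The paper's approach, by contrast, develops the characteristic-foliation reduction (Proposition~\ref{p.product}) which is reused elsewhere in the paper, so its detour has independent value; but for the statement at hand your argument is the more direct one. The only points to make fully explicit in a write-up are the two you already flag: the identification $\Omega^1_{Z,x}\otimes k(x)\cong (T^{\mathrm{Zar}}_{Z,x})^*$ and the compatibility $i^*\circ d = d\circ i^*$ on the algebraic de~Rham complex, both of which are standard.
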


By Theorem \ref{t.torsion}, normal Legendrian singularities provide examples of normal singularities with explicit nonzero torsion differentials.
We mention that some examples of normal singularities with
nonzero torsion differentials
 were given in \cite{GR} by cohomological methods.
 One of their examples, the cone over the twisted cubic curve ($d=3$ in Proposition 4.1 of \cite{GR}), is a Legendrian singularity
 in Example \ref{e.sub}.

\paragraph{\bf Acknowledgment.}
I am grateful to  Manfred Lehn and Duco van Straten for showing me the subtle difference between tangent cones and projectivized tangent cones. I would like to thank Go-o Ishikawa  for discussions on Legendrian singularities.

\section{Torsion differentials of Legendrian singularities}\label{s.torsion}

\begin{notation}\label{n.standard}{\rm
Fix a linear coordinate system $(x_1, \ldots, x_{2m+1})$ on $\C^{2m+1}$.  Set
\begin{eqnarray*}  \theta &:=& \sum_{i=1}^m (x_{m+i} {\rm d} x_i - x_i {\rm d} x_{m+i}) - {\rm d} x_{2m+1}  \end{eqnarray*}
Then $\theta =0$ defines a contact structure on $\C^{2m+1}$, which we call the {\em standard contact structure}. }\end{notation}

 By Darboux theorem  (Chapter 4, Section 1.1 of \cite{AG}), any contact structure is locally equivalent to the standard contact structure. Thus when studying a Legendrian singularity $x \in Z \subset (M, D)$, we may
 assume that $M$ is a neighborhood of $\C^{2m+1}$ and $D$ is the standard contact structure. We remark that in many references (like \cite{AG} or \cite{Kb}) the form $\sum_{i=1}^m x_i {\rm d} x_{m+i} - {\rm d} x_{2m+1}$ is used as the standard form. When algebro-geometric tools are used, however, our choice $\theta$ is more convenient because it is the expression of a contact structure on $\BP^{2m+1}$  in affine coordinates.

The following is a standard result in contact geometry. It is essentially given in p.~79 of \cite{AG} or pp.~30-31 of \cite{Kb}. As our standard form $\theta$ is slightly different from theirs, we recall the proof for readers' convenience.

\begin{theorem}\label{t.Kobayashi}
In Notation \ref{n.standard}, let $U$ be a neighborhood of $0 \in \C^{2m+1}$.  For a holomorphic function $f$ on $U$, let $F \subset U$ be the hypersurface defined by $f=0$ and let $v^f$ be
the holomorphic vector field on $U$ defined by 
\begin{align*}
2 v^f & = \sum_{k=1}^m (\frac{\partial f}{\partial x_{m+k}} - \frac{\partial f}{\partial x_{2m+1}}  x_k ) \frac{\partial }{\partial x_k} + \sum_{k=1}^m (-\frac{\partial f}{\partial x_k} - \frac{\partial f}{\partial x_{2m+1}} x_{m+k})\frac{\partial}{\partial x_{m+k}} \\
& + \left(\sum_{k=1}^m (\frac{\partial f}{\partial x_k} x_k+ \frac{\partial f}{\partial x_{m+k}} x_{m+k} ) - 2f \right) \frac{\partial}{\partial x_{2m+1}}. 
\end{align*}
Then  \begin{itemize} \item[\rm (i)]
$\theta(v^f) = f$; \item[\rm (ii)] $ v^f(f) = - f \frac{\partial f}{\partial x_{2m+1}}$;
  \item[\rm (iii)]
     $v^f$ is zero at a point $y \in F$ if and only if $F$ is singular at $y$ or
     $\theta(T_{F,y}) =0$; \item[\rm (iv)] $v^f$ is tangent to the hypersurface  $F$; \item[\rm (v)] ${\rm Lie}_{v^f} \theta = - \frac{\partial f}{\partial x_{2m+1}} \theta$;  \item[\rm (vi)]
     for any nonsingular point $y \in F$ and any tangent vector $w \in T_{F,y}$ satisfying $\theta(w) =0$, we have  ${\rm d} \theta (v^f(y), w) = 0$
     ; and \item[\rm (vii)]  $v^f$ is tangent to
   any Legendrian subvariety $Z \subset U$ contained in the hypersurface $ F.$ \end{itemize} \end{theorem}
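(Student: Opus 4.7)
The plan is to handle the seven assertions in order, noting that (i), (ii), (iv), (v) are formal manipulations of the explicit formula for $v^f$, while the geometric content lies in (iii), (vi), (vii).

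I would first verify (i) and (ii) by direct substitution. For (i), when $\theta$ is evaluated on $v^f$, the cross terms containing $\partial f/\partial x_{2m+1}$ coming from the $\partial/\partial x_k$ and $\partial/\partial x_{m+k}$ components cancel in pairs, and the remaining sum combines with the $\partial/\partial x_{2m+1}$ coefficient of $v^f$ to give exactly $f$; this explains the shape of the last coefficient, including the $-2f$ correction. For (ii), the first $2m$ components of $v^f$ contribute antisymmetrically to $v^f(f)$ and cancel, and only the $\partial f/\partial x_{2m+1}$-term survives, evaluating to $-f\,\partial f/\partial x_{2m+1}$ by the same bookkeeping used in (i). Part (iv) is then immediate, since $v^f(f)\in(f)$ means $v^f$ preserves the ideal defining $F$. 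For (v), Cartan's formula $\mathrm{Lie}_{v^f}\theta = d(\iota_{v^f}\theta)+\iota_{v^f}d\theta$ combined with (i) reduces the claim to a direct calculation of $\iota_{v^f} d\theta$ using $d\theta = 2\sum_i dx_{m+i}\wedge dx_i$; the $df$ contributions in the $x_1,\ldots,x_{2m}$ directions cancel against $\iota_{v^f}d\theta$, and the $dx_{2m+1}$ direction combines with the remainder to produce $-(\partial f/\partial x_{2m+1})\theta$.

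For (iii), setting each coefficient of $v^f$ to zero at $y\in F$, the first $2m$ equations rearrange, with $c := -\partial f/\partial x_{2m+1}(y)$, to $\partial f/\partial x_k(y) = c\,x_{m+k}(y)$ and $\partial f/\partial x_{m+k}(y) = -c\,x_k(y)$, which together with the value of $\partial f/\partial x_{2m+1}(y)$ is exactly the condition $df(y) = c\,\theta(y)$ (the third defining coefficient vanishes automatically once $f(y)=0$). If $c\neq 0$ then $F$ is smooth at $y$ and $T_{F,y} = \ker\theta(y)$, so $\theta(T_{F,y})=0$; if $c=0$ all partials of $f$ vanish at $y$, so $F$ is singular there. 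The converse is immediate back-substitution. For (vi), (v) and Cartan once more give $d\theta(v^f(y),w) = (\mathrm{Lie}_{v^f}\theta - df)(w) = -(\partial f/\partial x_{2m+1})\,\theta(w) - df(w)$, and both summands vanish by the hypotheses $\theta(w)=0$ and $w\in T_{F,y} = \ker df(y)$.

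For (vii), which is the point of the theorem for the applications in the paper, pick $z$ smooth on both $Z$ and $F$ with $\theta(T_{F,z})\neq 0$; the complementary possibilities all force $v^f(z)=0$ via (iii). Set $W := D_z\cap T_{F,z}$, a hyperplane in the symplectic vector space $(D_z,d\theta|_{D_z})$, so its $d\theta$-orthogonal $W^\perp\subset D_z$ is one-dimensional and sits inside $W$. By (i), $\theta(v^f(z)) = f(z) = 0$, so $v^f(z)\in D_z$; with (iv) this gives $v^f(z)\in W$; and (vi) forces $v^f(z)\in W^\perp$. Since $T_{Z,z}\subset W$ is Lagrangian in $D_z$, we have $W^\perp \subset T_{Z,z}^{\perp} = T_{Z,z}$, so $v^f(z)\in T_{Z,z}$; density of the smooth locus of the reduced variety $Z$ then extends this pointwise tangency to the statement that $v^f$ preserves the ideal of $Z$. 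I expect the main obstacle to be this final linear-algebra step, which crucially uses the Lagrangian property of $T_{Z,z}$ in $D_z$; everything else is either mechanical or a one-line consequence of the earlier parts.
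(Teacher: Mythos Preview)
Your proposal is correct and follows essentially the same route as the paper: (i)--(iii) by direct calculation, (iv) from (ii), (v) and (vi) via Cartan's formula, and (vii) by the linear-algebra observation that $v^f(z)$ lies in the $d\theta$-orthogonal of the Lagrangian subspace $T_{Z,z}\subset D_z$, hence in $T_{Z,z}$ itself. Your treatment of (vii) is in fact slightly more careful than the paper's, which tacitly works at a point where $F$ is smooth and $\theta(T_{F,z})\neq 0$; you correctly dispose of the remaining cases via (iii).
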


\begin{proof} (i), (ii), (iii) can be checked by straightforward calculation. (iv) is immediate from (ii). (v) can be checked from  Cartan formula,
$${\rm Lie}_{v^f} \theta = {\rm d} (\theta(v^f)) + {\rm d}\theta (v^f, \cdot) = {\rm d} f + {\rm d} \theta(v^f, \cdot),$$ and $ {\rm d} \theta = 2 \sum_{k=1}^m {\rm d} x_{m+k} \wedge {\rm d} x_k.$
(vi) follows from  Cartan formula again:  $${\rm d} \theta(v^f, w) = ({\rm Lie}_{v^f}\theta) (w) - {\rm d} f (w) = - \frac{\partial f}{\partial x_{2m+1}} \theta(w) - {\rm d} f(w). $$
 It remains to prove (vii). Pick a nonsingular point $z \in Z$.   The 2-form ${\rm d} \theta$ induces a nondegenerate 2-form on the vector space $D_z$ by the definition of the contact structure. The tangent space $T_{Z,z}$ is an isotropic subspace of $D_z$ with respect to this 2-form ${\rm d} \theta|_{D_z}$ and it is contained in $T_{F,z}$ from $Z \subset F$.
 By (vi), the vector $v^f(z) \in D_z$ satisfies ${\rm d} \theta (v^f(z), T_{Z,z}) =0$.
 Thus the linear span $\langle v^f(z), T_{Z,z} \rangle$ is a subspace of the $2m$-dimensional vector space $D_z$ and is isotropic with respect to the nondegenerate 2-form ${\rm d} \theta|_{D_z}$. Since $\dim T_{Z,z} = m$, we have $v^f(z) \in T_{Z,z}$.   \end{proof}

To see the geometric meaning of Theorem \ref{t.Kobayashi} (vi), it is convenient to recall the notion of Cauchy characteristic of a distribution.

\begin{definition} {\rm Let $ \sD \subset T_X$ be a vector subbundle of corank $1$ on a complex manifold and denote by
$\sigma: \wedge^2 \sD \to T_X/\sD$ the Frobenius bracket tensor.
For each $x \in X$, the Cauchy characteristic of $\sD$ at $x$ is
$${\rm Ch}(\sD)_x := \{ v \in \sD_x, \sigma_x(v, u) = 0 \mbox{ for all } u \in \sD_x \}.$$
In particular, the subbundle $\sD$ is a contact structure if and only if ${\rm Ch}(\sD)_x =0$ for each $x \in X$. } \end{definition}

The following is standard. (1) is straightforward to check and (2) is a special case of  Theorem 2.2 in Chapter 2 of \cite{BCG}.

\begin{lemma}\label{l.Cauchy}
Let $(M, D)$ be a contact manifold and let $X \subset M$ be a nonsingular hypersurface.
Let $X_o \subset X$ be the open subset defined by
$$X_o := \{ x \in X_o, T_{X, x} \neq D_x\}$$ such that $\sD:= D|_{X_o} \cap T_{X_o}$ is a vector subbundle of corank $1$ in $T_{X_o}$. Then
\begin{itemize}
\item[\rm (1)] $\dim {\rm Ch}(\sD)_x =1$ for each $ x \in X_o$. In particular, the Cauchy characteristic ${\rm Ch}(\sD)$ determines a foliation of rank $1$ on $X_o$.
    \item[\rm (2)] For each $x \in X_o$, choose a neighborhood $O \subset X_o$ of $x$
    equipped with a holomorphic submersion $\psi: O \to B$ whose fibers are leaves of the foliation in (1). Then there exists a contact structure $D'$ on $B$ such that
    $\sD_y = ({\rm d}\psi)_y^{-1}(D'_{\psi(y)})$ for each $y \in O$. \end{itemize}
    \end{lemma}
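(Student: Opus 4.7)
The plan is to reduce everything to the linear algebra of the symplectic form $\mathrm{d}\theta|_D$. First I would observe that on $X_o$ the 1-form $\eta := \theta|_X$ is nowhere vanishing (precisely because $T_{X,x} \neq D_x$ there) and that $\ker \eta = \sD$, so $\sD$ is cut out inside $TX_o$ as the kernel of a global 1-form on each small neighborhood. Since $\mathrm{d}(\theta|_X) = (\mathrm{d}\theta)|_{TX}$, the Frobenius bracket tensor $\sigma : \wedge^2 \sD \to TX_o/\sD$ is, after identifying $TX_o/\sD$ with the trivial line bundle via $\eta$, represented by $-\mathrm{d}\theta$ restricted to $\sD \times \sD$.

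For (1), at a point $x \in X_o$ the subspace $\sD_x \subset D_x$ is a hyperplane in the symplectic vector space $(D_x, \mathrm{d}\theta|_{D_x})$. The symplectic orthogonal $\sD_x^{\perp}$ (taken inside $D_x$) is therefore $1$-dimensional, and because $\sD_x$ has odd dimension $2m-1$ the radical $\sD_x \cap \sD_x^{\perp}$ of the restricted form coincides with $\sD_x^{\perp}$ itself. By the identification of the previous paragraph, this radical is exactly ${\rm Ch}(\sD)_x$, proving the first assertion of (1). The fact that ${\rm Ch}(\sD)$ is involutive (and hence integrable to a rank-$1$ foliation by Frobenius) is a short computation: if $u, v$ are local sections of ${\rm Ch}(\sD)$ then Cartan's formula gives $\mathcal{L}_u \eta = \iota_u \mathrm{d}\eta$, which vanishes on $\sD$ by the Cauchy-characteristic condition and therefore is a multiple of $\eta$; iterating, $\mathcal{L}_{[u,v]}\eta = [\mathcal{L}_u, \mathcal{L}_v]\eta$ is again a multiple of $\eta$, so $[u,v] \in \sD$ and in particular $[u,v] \in {\rm Ch}(\sD)$.

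For (2), the computation $\mathcal{L}_v \eta = (\text{function}) \cdot \eta$ for $v$ a local section of ${\rm Ch}(\sD) = \ker \mathrm{d}\psi$ shows that the hyperplane field $\sD = \ker \eta$ is preserved by the local flows along the fibers of $\psi$. Together with ${\rm Ch}(\sD) \subset \sD$, this lets $\sD$ descend: I would define $D'_b := \mathrm{d}\psi_y(\sD_y)$ for any $y \in \psi^{-1}(b)$, check independence of $y$ from the invariance just established, and verify $\sD = (\mathrm{d}\psi)^{-1}(D')$ by a rank count (since $\sD$ has rank $2m-1$, ${\rm Ch}(\sD)$ has rank $1$, and $\dim B = 2m-1$, the distribution $D'$ has corank $1$ in $TB$). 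To see that $D'$ is a contact structure on $B$, I would pick a local section $s : B \to O$ of $\psi$ and set $\theta' := s^*\eta$; then $\ker \theta' = D'$, and the nondegeneracy of $\mathrm{d}\theta'$ on $D'$ follows from the canonical identification $D'_b \cong \sD_y / {\rm Ch}(\sD)_y = \sD_y / \sD_y^{\perp}$, on which the induced $2$-form coming from $\mathrm{d}\theta|_{D_y}$ is nondegenerate by definition of the symplectic radical.

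The only mildly subtle point is checking that the descent in (2) genuinely produces a \emph{subbundle} $D' \subset TB$ (not just a pointwise distribution) and that $\theta'$ is well-defined up to nowhere-vanishing factor; both boil down to the identity $\mathcal{L}_v \eta \in C^\infty \cdot \eta$ derived above, so no further obstacle is anticipated beyond bookkeeping.
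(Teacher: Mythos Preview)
Your argument is correct and follows the standard route; the paper itself does not prove this lemma, merely declaring (1) ``straightforward to check'' and citing (2) as a special case of Theorem~2.2 in Chapter~2 of \cite{BCG}. Your linear-algebra identification of ${\rm Ch}(\sD)_x$ with the symplectic radical of the hyperplane $\sD_x \subset D_x$, and your descent of $\sD$ along the Cauchy-characteristic leaves, are exactly what that citation unpacks to.

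One cosmetic gap in your write-up of (1): the implication ``$\mathcal{L}_{[u,v]}\eta$ is a multiple of $\eta$, so $[u,v] \in \sD$'' is false in general (a Reeb-type field $R$ can satisfy $\mathcal{L}_R\eta = 0$ while $\eta(R)\neq 0$), and ``in particular $[u,v]\in{\rm Ch}(\sD)$'' is not a consequence of $[u,v]\in\sD$. Neither matters here: $[u,v]\in\sD$ is immediate from $\sigma(u,v)=0$, which is part of the definition of ${\rm Ch}(\sD)$, and in any case a rank-$1$ distribution is trivially involutive. With that adjustment the proof is complete.
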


Theorem \ref{t.Kobayashi} (vi) says that the leaves of $v^f$ are the foliation given
by Lemma \ref{l.Cauchy} applied to the nonsingular locus $X$ of the hypersurface $F$.
This is used to prove the  next proposition, which is a direct translation   of Proposition 1 in \cite{Gi} in  symplectic geometry into the setting of contact geometry.

\begin{proposition}\label{p.product}
Let $0 \in Z \subset (\C^{2m+1}, D=(\theta=0))$ be a Legendrian singularity.
Suppose the Zariski tangent space $T_{Z,0} \subset T_{\C^{2m+1},0}$ does not contain $D_0 \subset T_{\C^{2m+1},0}.$  Then there exist \begin{itemize}
 \item[\rm (1)] a holomorphic function $f$ in a neighborhood $U$ of $0$ in $\C^{2m+1}$ defining  a smooth hypersurface $0 \in F \subset U$ with $Z \subset F$
 and $T_{F,0} \neq D_0$;
 \item[\rm (2)] a contact manifold $(M', D')$ with $ \dim M' = 2m-1$; and
    \item[\rm (3)] a submersion $\psi: F \to M'$ whose fibers are leaves of the vector field $v^f$ in the sense of Theorem \ref{t.Kobayashi},
    \end{itemize}
    such that
    \begin{itemize}
    \item[\rm (a)] $D_x \cap T_{F,x} = ({\rm d}_x \psi)^{-1} (D'_{\psi(x)})$ for any $x \in F$; and
        \item[\rm (b)]
     $\psi(Z)$ is a Legendrian subvariety of $(M', D')$ and $Z = \psi^{-1}(\psi(Z))$. \end{itemize}
    \end{proposition}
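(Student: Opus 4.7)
My plan is to extract the function $f$ from the defining ideal of $Z$ so that the vector field $v^f$ of Theorem \ref{t.Kobayashi} spans the Cauchy characteristic of the resulting hypersurface $F$, and then to take $\psi$ to be the local quotient by this foliation, as supplied by Lemma \ref{l.Cauchy}. The hypothesis $T_{Z,0}\not\supset D_0$ is there precisely to make such an $f$ available.

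To construct $f$, let $I\subset\sO_{\C^{2m+1},0}$ be the ideal of $Z$ and $\fm$ the maximal ideal at $0$. The conormal space $(I+\fm^2)/\fm^2\subset T^*_{\C^{2m+1},0}$ cuts out $T_{Z,0}$, so the hypothesis that $T_{Z,0}$ does not contain $D_0$ is equivalent to saying that this space of linear forms restricts nontrivially to $D_0$. I pick $f\in I$ whose differential $df(0)$ has nonzero restriction to $D_0$; equivalently, $df(0)$ is not a multiple of $\theta(0)$. After shrinking $U$, the hypersurface $F:=\{f=0\}$ is smooth at $0$, contains $Z$, and satisfies $T_{F,0}\neq D_0$, giving (1). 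By Theorem \ref{t.Kobayashi}(iii) this forces $v^f(0)\neq 0$, so after a further shrinking $v^f$ is nowhere zero on $F$. By (i), $\theta(v^f)=f$ vanishes on $F$, and by (iv) $v^f|_F$ is a section of $\sD:=D|_F\cap T_F$; statement (vi) then says that ${\rm d}\theta(v^f(y),w)=0$ for every $w\in\sD_y$, i.e.\ $v^f(y)\in\mathrm{Ch}(\sD)_y$. Since $\mathrm{Ch}(\sD)$ is a line bundle by Lemma \ref{l.Cauchy}(1), $v^f$ spans it near $0$, and Lemma \ref{l.Cauchy}(2) produces a submersion $\psi:O\to M'$ onto a contact manifold $(M',D')$ of dimension $2m-1$ whose fibers are the integral curves of $v^f$ and such that $\sD_y=({\rm d}_y\psi)^{-1}(D'_{\psi(y)})$ for every $y\in O$. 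This yields (2), (3), and (a) simultaneously.

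For (b), Theorem \ref{t.Kobayashi}(vii) tells me that $v^f$ is tangent to $Z$, so $Z$ is invariant under its flow and hence is a union of fibers of $\psi$; this gives $Z=\psi^{-1}(\psi(Z))$ locally. It forces $\dim\psi(Z)=\dim Z-1=m-1$, the correct dimension for a Legendrian subvariety of $M'$. At any smooth point $z\in Z$ one has $T_{Z,z}\subset D_z\cap T_{F,z}=\sD_z=({\rm d}_z\psi)^{-1}(D'_{\psi(z)})$, so ${\rm d}_z\psi(T_{Z,z})\subset D'_{\psi(z)}$, and $\psi(Z)$ is Legendrian. I do not expect a serious obstacle in this argument: once Theorem \ref{t.Kobayashi} is in hand, the main insight is simply that its explicit formula for $v^f$ computes the Cauchy characteristic of any nondegenerate smooth hypersurface through $Z$, after which everything assembles from the cited lemmas. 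The delicate point worth watching is the saturation of $Z$ by the fibers of $\psi$ at singular points of $Z$, but this follows from part (vii) of Theorem \ref{t.Kobayashi}, which is stated for arbitrary Legendrian subvarieties.
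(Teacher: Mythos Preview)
Your proof is correct and follows essentially the same route as the paper: extract $f$ from the ideal of $Z$ using $D_0\not\subset T_{Z,0}$, identify $v^f$ with the Cauchy characteristic of $F$ via Theorem~\ref{t.Kobayashi}(i),(iii),(iv),(vi), apply Lemma~\ref{l.Cauchy} to obtain $\psi$ and $(M',D')$, and use (vii) for the saturation of $Z$. You have simply filled in more detail than the paper's terse proof, including the explicit verification that $v^f$ spans $\mathrm{Ch}(\sD)$ and that $\psi(Z)$ is Legendrian.
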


    \begin{proof}
    The assumption $D_0 \not\subset T_{Z,0}$ implies  the existence of $f$ and $F$ in (1). Using the vector field $v^f$ of Theorem \ref{t.Kobayashi}, we obtain a submersion $\psi: F \to M'$ whose fibers are leaves of $v^f$.  By  (vi) of Theorem \ref{t.Kobayashi} and Lemma \ref{l.Cauchy},
    there exists a contact structure $D'$ on $M'$ satisfying (a). It is clear from (vii) of Theorem \ref{t.Kobayashi} that $Z \cap U = \psi^{-1}(\psi(Z \cap U))$
    and $\psi(Z \cap U)$ is a Legendrian subvariety of $(M', D')$. \end{proof}

By Proposition \ref{p.product}, the proof of  Theorem \ref{t.torsion} is reduced to the next theorem.

\begin{theorem}\label{t.torsion2}
Let $0\in Z \subset \C^{2m+1}$ be the germ of a Legendrian subvariety with respect to the standard contact structure such that $D_0 \subset T_{Z,0}$.
Then the differential $\theta|_Z \in \Omega_{Z,0}$ is not zero.  \end{theorem}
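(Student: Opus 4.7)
The plan is to argue by contradiction, using the fact that the exterior derivative descends to the algebraic de~Rham complex of the singular variety $Z$. Writing $I\subset \sO_{M,0}$ for the ideal of $Z$ with $M=\C^{2m+1}$, I would use the description
\[
\Omega^{p}_{Z,0}\;=\;\Omega^{p}_{M,0}\big/\bigl(I\cdot \Omega^{p}_{M,0}\,+\,dI\wedge \Omega^{p-1}_{M,0}\bigr),
\]
and a short check using the Leibniz rule and $d^{2}=0$ shows that $d$ preserves this relation submodule, yielding a well-defined $d\colon \Omega_{Z,0}\to \Omega^{2}_{Z,0}$. Assuming for contradiction that $\theta|_{Z}=0$ in $\Omega_{Z,0}$, we immediately obtain $(d\theta)|_{Z}=d(\theta|_{Z})=0$ in $\Omega^{2}_{Z,0}$. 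So it is enough to prove that $(d\theta)|_{Z}\neq 0$ there.

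To detect this nonvanishing I would reduce modulo the maximal ideal $\fm_{Z,0}$. Using $T^{*}_{Z,0}=T^{*}_{M,0}/(dI)_{0}$, the quotient $\Omega^{2}_{Z,0}/\fm_{Z,0}\Omega^{2}_{Z,0}$ is canonically isomorphic to $\wedge^{2} T^{*}_{Z,0}$, and under this isomorphism $(d\theta)|_{Z}$ maps to the restriction of the constant $2$-form
\[
(d\theta)_{0}\;=\;2\sum_{i=1}^{m}(dx_{m+i})_{0}\wedge (dx_{i})_{0}
\]
to $T_{Z,0}\wedge T_{Z,0}$. By hypothesis $D_{0}\subset T_{Z,0}$, and by the very definition of a contact structure the restriction of $d\theta$ to $D_{0}$ is nondegenerate (it is twice the standard symplectic form on the contact hyperplane). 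Consequently the restriction of $(d\theta)_{0}$ to the larger subspace $T_{Z,0}$ is still nonzero, giving the required contradiction.

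The only point meriting real care is the claim that $d$ is well-defined on the K\"ahler differentials of the singular variety $Z$; this is a routine algebraic verification based on $d^{2}=0$ and Leibniz. Beyond this formal check, the entire content of the proof is concentrated in the one symplectic observation that the nondegeneracy of $d\theta$ on $D_{0}$ survives restriction to any subspace containing $D_{0}$. I therefore do not expect a significant obstacle.
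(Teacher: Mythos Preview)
Your argument is correct. The exterior derivative does descend to the Kähler--de~Rham complex of $Z$ (Leibniz and $d^2=0$ make $I\Omega^p_{M,0}+dI\wedge\Omega^{p-1}_{M,0}$ a $d$-stable submodule), and the identification $\Omega^2_{Z,0}\otimes_{\sO_{Z,0}}\C\cong\wedge^2 T^*_{Z,0}$ together with the nondegeneracy of $(d\theta)_0$ on $D_0\subset T_{Z,0}$ gives the contradiction exactly as you say.

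The paper's proof takes a different route. Rather than passing to $d\theta$ and reducing modulo $\fm_{Z,0}$, it introduces an anisotropic weight on $\sO_{\C^{2m+1},0}$ and $\Omega_{\C^{2m+1},0}$ with ${\rm wt}(x_i)={\rm wt}(dx_i)=1$ for $i\le 2m$ and ${\rm wt}(x_{2m+1})={\rm wt}(dx_{2m+1})=2$, so that $\theta$ is homogeneous of weight $2$. The hypothesis $D_0\subset T_{Z,0}$ forces every element of $I$ (hence of $dI$) to have weight $\ge 2$, so $I\cdot\Omega_{M,0}$ has weight $\ge 3$; if $\theta$ lay in $\sO_{M,0}\,dI+I\,\Omega_{M,0}$, its weight-$2$ part would have to be a constant times $d$ of some weight-$2$ function, i.e.\ $d$-exact, contradicting $d\theta\neq 0$. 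Your argument is more conceptual and coordinate-free: it isolates the single symplectic fact that $(d\theta)_0|_{D_0}$ is nondegenerate and feeds it through the fiber of $\Omega^2_Z$ at $0$. The paper's weighted-order argument stays entirely inside $\Omega^1$ and avoids invoking the de~Rham differential on the singular space, at the cost of a small bookkeeping device. Both proofs ultimately hinge on the non-exactness of $\theta$, but they detect it in different graded pieces.
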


\begin{proof}
We define a weight function  {\rm wt} on $\sO_{\C^{2m+1}, 0}$ and $\Omega_{\C^{2m+1}, 0}$ in the following way. Set
 $${\rm wt} (x_i) = {\rm wt}( {\rm d} x_i)  =  1 \mbox{ for } 1 \leq i \leq 2m \mbox{ and }
 {\rm wt} (x_{2m+1}) = {\rm wt}({\rm d} x_{2m+1}) = 2.$$ Define the weight ${\rm wt} (f)$ of a function $f \in \sO_{\C^{2m+1},0}$ as the weight of the monomial of lowest weight in the Taylor series of $f$ at $0$ and define the weight of elements of $ \Omega_{\C^{2m+1}, 0}$ such that ${\rm wt} ( f \xi) = {\rm wt}(f) + {\rm wt}(\xi)$ for any $\xi \in \Omega_{\C^{2m+1}, 0}.$
 Then  $${\rm wt}(\xi_1 + \xi_2) = {\rm wt}(\xi_1)\ \mbox{ if } \  \xi_1, \xi_2 \in \Omega_{\C^{2m+1}, 0} \mbox{ and } {\rm wt}(\xi_1) < {\rm wt}(\xi_2)$$ and $$ {\rm wt} ({\rm d} f) = {\rm wt}(f) \mbox{ if } f \in \sO_{\C^{2m+1}, 0} \mbox{ and } f(0) =0.$$

 Let $I$ be the ideal of $Z$ in $\sO_{\C^{2m+1}, 0}$.
The space of K\"ahler differentials of $Z$ at $0$ is given by (see e.g. Definition 1.109 of \cite{GLS})
$$\Omega_{Z,0} = \Omega_{\C^{2m+1},0} / ( \sO_{\C^{2m+1}, 0} {\rm d} I + I \cdot \Omega_{\C^{2m+1}, 0} ).$$
Suppose $\theta|_Z =0$, namely,
$$(\mbox{ the germ at $0$ of } \theta)  \  \in \  \sO_{\C^{2m+1}, 0} {\rm d} I + I \cdot \Omega_{\C^{2m+1}, 0}.$$
The condition $D_0 \subset T_{Z,0}$ implies that all elements of $I$ and ${\rm d} I$ have weight at least 2.
Thus all elements of $I \cdot \Omega_{\C^{2m+1},0}$ have weight at least $3$. Since ${\rm wt} (\theta)= 2$, the lowest order term of $\theta$ must be the lowest order term of some element in $\sO_{\C^{2m+1}, 0} \cdot {\rm d} I.$ To have weight 2,
the lowest order term must be ${\rm d}$-exact. But $\theta$ is homogeneous and not ${\rm d}$-exact. A contradiction. \end{proof}

\section{Lagrangian cones as Legendrian varieties}\label{s.cone}

We use the following terms regarding cones.

\begin{definition}\label{d.cone} {\rm
Let $V$ be a complex vector space, which we  regard as an affine space, and let $\Sym^{\bullet}V^*$ be the ring of polynomial functions on $V$.
An {\em affine cone} in $V$ is a subscheme $\sY$ of the affine space $V$ defined by  a homogeneous ideal $I \subset \Sym^{\bullet} V^*$. The corresponding projective subscheme  $\BP \sY \subset \BP V$ will be called the {\em projectivization} of $\sY$. If the subscheme $\sY$ is reduced, i.e., the ideal $I$ is radical  (  $I = \sqrt{I}$ ), we will call it a {\em reduced affine cone}. If $\sY$ is reduced, then so is its projectivization $\BP \sY \subset \BP V$.
But the converse is not always true.} \end{definition}

\begin{definition}\label{d.Lagrange} {\rm
Let $(V, \omega)$ be a symplectic vector space, i.e., a  vector space $V$ equipped with a nondegenerate anti-symmetric 2-form $\omega \in \wedge^2 V^*$. Let $2m$ be the dimension of $V$. An $m$-dimensional reduced affine cone $0\in \sY \subset V$ is called a {\em Lagrangian cone}
 if the restriction of $\omega$ to the nonsingular locus of $\sY$ is zero.} \end{definition}

\begin{lemma}\label{l.Legendre}
In Notation \ref{n.standard}, let $\C^{2m}$ be the hyperplane defined by
$(x_{2m+1} =0) $ and equipped with the symplectic form $$  {\rm d} \theta |_{\C^{2m}} = 2 \sum_{i=1}^m
{\rm d} x_{m+i} \wedge {\rm d} x_i.$$
Let $Z \subset \C^{2m}$ be an $m$-dimensional subvariety.
 When regarded as a subvariety of $\C^{2m+1}$ equipped with the standard contact structure, the variety $Z$  is a Legendrian subvariety if and only if $Z$ is a Lagrangian cone in $(\C^{2m}, {\rm d} \theta |_{\C^{2m}})$ in the sense of Definition \ref{d.Lagrange}. \end{lemma}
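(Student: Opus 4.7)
The plan is to translate both conditions into pointwise statements at smooth points of $Z$, exploiting the fact that $\theta|_{\C^{2m}}$ is precisely a Liouville $1$-form for $\omega = {\rm d}\theta|_{\C^{2m}}$. Concretely, let $E = \sum_{i=1}^{2m} x_i\, \partial/\partial x_i$ be the Euler vector field on $\C^{2m}$; I would first verify by direct calculation the identity
\[
\iota_E\, \omega \;=\; 2\, \theta|_{\C^{2m}},
\]
using $\iota_E({\rm d} x_{m+i} \wedge {\rm d} x_i) = x_{m+i}\, {\rm d} x_i - x_i\, {\rm d} x_{m+i}$. In particular, for any $v \in T_z\C^{2m}$ one has $\theta(v) = \frac{1}{2}\, \omega(E_z, v)$.

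For the direction ($\Leftarrow$), suppose $Z \subset \C^{2m}$ is a Lagrangian cone. At a smooth point $z \in Z$, the cone condition gives $E_z \in T_zZ$ and the Lagrangian condition gives $\omega|_{T_zZ} = 0$. The identity above then yields $\theta(v) = \frac{1}{2}\omega(E_z, v) = 0$ for every $v \in T_zZ$, so $T_zZ \subset D_z$. Combined with $\dim Z = m$, this is exactly the Legendrian condition of Definition \ref{d.Legendre}.

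For the converse ($\Rightarrow$), assume $Z$ is Legendrian in $\C^{2m+1}$ and contained in $\C^{2m}$. At a smooth point $z$, Definition \ref{d.Legendre} tells us that $T_zZ \subset D_z$ is isotropic with respect to ${\rm d}\theta|_{D_z}$; since $T_zZ \subset T_z\C^{2m}$, this isotropy takes place in $(T_z\C^{2m}, \omega)$. Because $\dim T_zZ = m$ equals half of $\dim \C^{2m}$, $T_zZ$ is $\omega$-Lagrangian, so $\omega$ vanishes on the smooth locus of $Z$. For the cone property, the vanishing $\theta|_{T_zZ} = 0$ combined with the Liouville identity forces $\omega(E_z, T_zZ) = 0$, placing $E_z$ in the $\omega$-orthogonal of $T_zZ$; but this orthogonal equals $T_zZ$ itself by Lagrangianity. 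Hence $E$ is tangent to $Z$ at every smooth point, so $Z$ is invariant under the scaling flow and, being a closed reduced subvariety of $\C^{2m}$, is a reduced affine cone in the sense of Definition \ref{d.cone}.

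The argument is largely mechanical once the Liouville identity is observed; the only point that could require care is the passage from $T_zZ \subset D_z$ to $\omega$-isotropy, but this is already built into Definition \ref{d.Legendre}, so I expect no serious obstacle.
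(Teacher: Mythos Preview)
Your proof is correct and follows essentially the same line as the paper's: both hinge on the Liouville identity $\iota_E\,\omega = 2\,\theta|_{\C^{2m}}$ (the paper phrases this as ``$\vec{R}\lrcorner\, {\rm d}\theta|_{\C^{2m}}$ is a constant multiple of $\theta|_{\C^{2m}}$''), and the direction $(\Leftarrow)$ is identical.

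The one noteworthy difference is in $(\Rightarrow)$, where you must show the Euler field is tangent to $Z$. The paper observes that $\vec{R} = -2\,v^{x_{2m+1}}|_{\C^{2m}}$ and invokes Theorem~\ref{t.Kobayashi}~(vii), which says $v^f$ is tangent to any Legendrian subvariety contained in $(f=0)$. You instead argue directly: from $\theta|_{T_zZ}=0$ and the Liouville identity you get $\omega(E_z, T_zZ)=0$, and since $T_zZ$ is Lagrangian it equals its own $\omega$-orthogonal, forcing $E_z \in T_zZ$. Your route is more self-contained (no appeal to Theorem~\ref{t.Kobayashi}) and makes the symplectic linear algebra transparent; the paper's route has the advantage of fitting Lemma~\ref{l.Legendre} into the general framework of contact vector fields $v^f$ that is used repeatedly elsewhere. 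Substantively, your argument is exactly what the proof of Theorem~\ref{t.Kobayashi}~(vii) specializes to when $f = x_{2m+1}$.
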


\begin{proof}
 We use the radial vector field on $\C^{2m}$ $$\vec{R}:= \sum_{i=1}^m( x_i \frac{\partial}{\partial x_i} + x_{m+i} \frac{\partial}{\partial x_{m+i}} ).$$
In terms of  Theorem \ref{t.Kobayashi}, the radial vector field $\vec{R}$ is the restriction of
$- 2 v^{x_{2m+1}}$ to $\C^{2m}$.

 Assume that $Z \subset \C^{2m}$ is a Lagrangian cone. As it is an affine cone, the radial vector field $\vec{R}$ is tangent to the smooth locus of $Z$.
  It is straightforward to check that the contraction $\vec{R} \lrcorner {\rm d} \theta |_{\C^{2m}} $ is a constant multiple of the 1-form $\theta|_{\C^{2m}}$. It follows that the restriction of $\theta$ to the smooth locus of $Z$ is zero. Thus $Z$ is Legendrian in $\C^{2m+1}$ with respect to $\theta$.

 Conversely, if a subvariety $Z \subset \C^{2m}$ is a Legendrian subvariety of $\C^{2m+1}$ with respect to $\theta$,  then the radial vector field $\vec{R} =- 2 v^{x_{2m+1}}|_{\C^{2m}}$ is tangent to $Z$ by Theorem
 \ref{t.Kobayashi} (vii). Thus $Z$ is a reduced affine cone. Since ${\rm d} \theta$ vanishes on the smooth locus of $Z$,
 it is a Lagrangian cone with respect to ${\rm d} \theta |_{\C^{2m}}$.  \end{proof}

\begin{definition}\label{d.LagCone} {\rm
We say that a Legendrian singularity $x\in Z \subset (M,D)$ is
a {\em Lagrangian cone singularity}, if it is contactomorphic to the germ at $0$ of a Lagrangian cone in $(\C^{2m}, {\rm d} \theta|_{\C^{2m}})$ regarded as a Legendrian subvariety of $(\C^{2m+1}, \theta)$ as in Lemma \ref{l.Legendre}.}
\end{definition}

We skip the proof of the following elementary lemma.

\begin{lemma}\label{l.conesing}
In the setting of Proposition \ref{p.product}, the Legendrian singularity $0 \in Z$ in $(\C^{2m+1}, D)$ is a Lagrangian cone singularity if and only if  the Legendrian singularity $\psi(0) \in \psi(Z)$ in $(M', D')$ is a Lagrangian cone singularity.
\end{lemma}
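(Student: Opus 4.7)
My plan is to reduce to a standard local model via Darboux and then exploit a product splitting of the standard contact form. By a contact Darboux theorem relative to a smooth hypersurface transverse to the contact distribution at the origin, I would assume $M = \C^{2m+1}$ with standard form $\theta_{\mathrm{std}}$ and $F = \{x_1 = 0\}$. Writing $u := x_{m+1}$, $v := x_1$, and grouping the remaining coordinates into a factor $\C^{2m-1}$, one obtains $\C^{2m+1} \cong \C^{2m-1} \times \C^2_{(u,v)}$ together with
\[
\theta_{\mathrm{std}} = \theta'_{\mathrm{std}} + (u\,\mathrm{d}v - v\,\mathrm{d}u),
\]
where $\theta'_{\mathrm{std}}$ is the standard contact form on $\C^{2m-1}$. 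In these coordinates, $\psi\colon F \to M' = \C^{2m-1}$ is the projection forgetting $u$, and the condition $Z = \psi^{-1}(\psi(Z))$ becomes $Z = W \times \C_u$ with $W := \psi(Z)$.

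For $(\Leftarrow)$ I would suppose $W$ is a Lagrangian cone singularity, witnessed by a local contactomorphism $\Phi'$ of $(\C^{2m-1}, \theta'_{\mathrm{std}})$ sending $W$ to a Lagrangian cone $\sY_0 \subset \{y_{2m-1}=0\}$. The splitting of $\theta_{\mathrm{std}}$ above lets me extend $\Phi'$ to a contactomorphism $\Phi$ of $(\C^{2m+1}, \theta_{\mathrm{std}})$ by a suitable conformal rescaling of the $\C^2_{(u,v)}$ factor that absorbs the conformal factor of $\Phi'$; then $\Phi$ carries $Z = W \times \C_u$ to $\sY_0 \times \C_u \subset \{x_1 = 0,\, x_{2m+1} = 0\}$. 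A direct tangent-space computation (using that $\mathrm{d}x_1 = 0$ on this locus and that $\sY_0$ is Lagrangian for $2\sum_{i=2}^m \mathrm{d}x_{m+i} \wedge \mathrm{d}x_i$) shows that $\sY_0 \times \C_u$ is a Lagrangian cone in $(\C^{2m}, \mathrm{d}\theta_{\mathrm{std}}|_{\C^{2m}})$, so $Z$ is a Lagrangian cone singularity.

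For $(\Rightarrow)$ the plan is as follows. Given a local contactomorphism $\Phi$ sending $Z$ to a Lagrangian cone $\sY \subset \{x_{2m+1}=0\}$, the reduction of $(F, Z)$ is contactomorphic to the reduction of $(\Phi(F), \sY)$; and since Proposition \ref{p.product}'s reduction is independent, up to local contactomorphism, of the choice of transverse hypersurface containing the Legendrian, I would replace $\Phi(F)$ by a more convenient transverse hypersurface $F''$ through the origin containing $\sY$. When $\sY$ is linearly degenerate in $\C^{2m}$, I would choose $F''$ to be a linear hyperplane vanishing on $\sY$ and distinct from $\{x_{2m+1}=0\}$; the Cauchy characteristic on such $F''$ is a constant vector field tangent to $\sY$, the invariance $\sY = (\psi'')^{-1}(\psi''(\sY))$ forces a cylindrical decomposition $\sY = \sY_0 \times \C$ with $\sY_0$ a Lagrangian cone in the reduced symplectic quotient, and $\psi(Z) \cong \sY_0$ emerges as a Lagrangian cone singularity. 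The main obstacle is the case of $\sY$ linearly non-degenerate in $\C^{2m}$: there one must choose $F'' = \{x_{2m+1} + h = 0\}$ for a weight-$2$ homogeneous $h \in I(\sY)$ and verify, through the non-trivial symplectic reduction of the coisotropic hypersurface $F'' \cap \{x_{2m+1} = 0\} \subset \C^{2m}$, that the image of $\sY$ remains a Lagrangian cone, using the key identity $\vec R g - 2 g \in I(\sY)$ for $g = x_{2m+1} + h$ to propagate the radial field on $\C^{2m}$ through the reduction.
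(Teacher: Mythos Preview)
The paper omits the proof entirely, calling it elementary, so there is no ``paper's proof'' to compare against line by line.  Your $(\Leftarrow)$ direction is correct: the splitting $\theta=\theta'+(u\,\mathrm{d}v-v\,\mathrm{d}u)$ and the extension of a contactomorphism $\Phi'$ of the base by the conformal rescaling $(u,v)\mapsto(\sqrt{g}\,u,\sqrt{g}\,v)$ (with $g$ the conformal factor of $\Phi'$) indeed produces a contactomorphism of $\C^{2m+1}$ carrying $W\times\C_u$ to $\sY_0\times\C_u$, and the latter is visibly a Lagrangian cone in $\C^{2m}$.

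For $(\Rightarrow)$, however, your ``main obstacle'' does not exist.  You are working in the setting of Proposition~\ref{p.product}, whose standing hypothesis is $D_0\not\subset T_{Z,0}$.  A contactomorphism carries $D_0$ to $D_0$ and $T_{Z,0}$ to $T_{\sY,0}$, so $D_0\not\subset T_{\sY,0}$ as well.  But $\sY$ is a cone contained in $\C^{2m}=D_0$, hence its Zariski tangent space at $0$ (the linear span of $\sY$) lies inside $D_0$.  These two facts force $T_{\sY,0}\subsetneq D_0$, i.e.\ $\sY$ is automatically linearly degenerate in $\C^{2m}$.  So your case~(b) is vacuous, and only the easy case~(a) occurs: choose a coordinate hyperplane $F''=\{x_1=0\}$ containing $\sY$; the Cauchy characteristic on $F''$ is the constant field $\partial_{x_{m+1}}$, and $\sY$ being a reduced Lagrangian cone in $\{x_1=0\}\cap\C^{2m}$ is then a cylinder $\sY_0\times\C_{x_{m+1}}$ with $\sY_0$ a Lagrangian cone in $\C^{2m-2}$, so the reduction $\psi''(\sY)=\sY_0$ is itself a Lagrangian cone.

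What remains in your argument is the assertion that the reduction in Proposition~\ref{p.product} is independent, up to contactomorphism of Legendrian germs, of the choice of transverse hypersurface $F\supset Z$.  This is true and is what one would expect from the uniqueness of contact reduction, but you state it without justification; since the paper labels the lemma elementary, it is presumably regarded as standard, yet it is the one genuine ingredient you are importing.  A clean way to see it in the present situation is to transport $F$ by the given contactomorphism to a hypersurface through $\sY=\sY_0\times\C_{x_{m+1}}$ and then run the same extension trick from your $(\Leftarrow)$ argument in reverse, comparing the two product decompositions of $\sY$.  In any case, once you drop the non-existent nondegenerate case, the proof becomes as short as the paper's ``elementary'' label suggests.
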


 There is another way that Lagrangian cones in $(V, \omega)$ give rise to Legendrian subvarieties of a contact manifold. The symplectic form $\omega$ provides the projective space $\BP V$  with the following contact structure (Chapter 4, Section 1.2, Example A in \cite{AG},  Example 2.1 in \cite{LB2}, Section E.1 in \cite{Bu2}).

 \begin{definition}\label{d.PV} {\rm
 For a symplectic vector space $(V,\omega)$, for a point $[v] \in \BP V$ corresponding to
 $ v \in V\setminus 0$, define $D^{\omega}_{[v]} \subset T_{\BP V, [v]}$ by
 $$D^{\omega}_{[v]} := \{ h \in \Hom(\C v, V/(\C v)) = T_{\BP V, [v]}, \omega(v, h(v)) =0\}.$$
 Then the subbundle $D^{\omega} \subset T_{\BP V}$ is a contact structure on $\BP V$.} \end{definition}

The following is well-known (see e.g. Proposition E.2 in \cite{Bu2}).

\begin{proposition}\label{p.projLegendre}
In Definition \ref{d.PV}, for a reduced affine cone $\sY \subset V,$ its projectivization $\BP \sY \subset \BP V$ is a Legendrian subvariety with respect to $D^{\omega}$ if and only if $\sY$ is  Lagrangian with respect to $\omega$.
 \end{proposition}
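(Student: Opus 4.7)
The plan is to translate both the Legendrian and Lagrangian conditions into the vanishing of natural differential forms on the smooth locus $\sY_{\rm sm}$, and then check these two vanishing conditions are equivalent by means of a Liouville-type $1$-form. Consider the projection $\pi : V \setminus \{0\} \to \BP V$, whose differential at $v$ identifies with the quotient map $V \to V/\C v = T_{\BP V,[v]} = \Hom(\C v, V/\C v)$ sending $w$ to the homomorphism $cv \mapsto cw \bmod \C v$, with kernel $\C v$. Because $\sY$ is a reduced affine cone, the radial vector $v$ is tangent to $\sY$ at every smooth point $v \in \sY_{\rm sm} \setminus \{0\}$, since it is the infinitesimal generator of the $\C^*$-action preserving $\sY$. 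Hence $\pi$ restricts to a submersion from $\sY_{\rm sm} \setminus \{0\}$ onto the smooth locus of $\BP \sY$, and $T_{\BP \sY,[v]} = d\pi_v(T_{\sY,v})$.

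Under these identifications, the Legendrian condition $T_{\BP \sY,[v]} \subset D^\omega_{[v]}$ at each smooth $[v]$ reads exactly $\omega(v, w) = 0$ for every $w \in T_{\sY,v}$. I introduce the $1$-form $\alpha$ on $V$ defined by $\alpha_v(X) = \omega(v,X)$, so that $\BP \sY$ is Legendrian if and only if $\alpha$ vanishes on $\sY_{\rm sm}$ as a $1$-form, while $\sY$ is Lagrangian if and only if $\omega$ vanishes on $\sY_{\rm sm}$ as a $2$-form. For the implication \emph{Lagrangian $\Rightarrow$ Legendrian}, I use that $v \in T_{\sY,v}$ together with $\omega|_{T_{\sY,v} \times T_{\sY,v}} = 0$ to obtain $\alpha_v(w) = \omega(v,w) = 0$ for all $w \in T_{\sY,v}$.

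For the converse direction, a direct computation in any symplectic basis yields the identity $d\alpha = 2\omega$ on $V$. Since the exterior derivative commutes with restriction to a submanifold, the vanishing of $\alpha|_{\sY_{\rm sm}}$ forces $0 = d(\alpha|_{\sY_{\rm sm}}) = (d\alpha)|_{\sY_{\rm sm}} = 2\,\omega|_{\sY_{\rm sm}}$, and $\sY$ is Lagrangian. The dimension conventions are compatible on both sides: Legendrian subvarieties of the $(2m-1)$-dimensional contact manifold $\BP V$ have dimension $m-1$, while Lagrangian cones in $V$ are required by Definition~\ref{d.Lagrange} to have dimension $m$, which corresponds to $\dim \BP \sY = m-1$. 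I do not foresee a serious obstacle: the heart of the argument is the identity $d\alpha = 2\omega$ for the Liouville-type $1$-form $\alpha_v(\cdot) = \omega(v,\cdot)$, which, together with the cone property $v \in T_{\sY,v}$, shows that the extra contraction against $v$ built into the contact condition on $\BP V$ is precisely the bridge between the symplectic and the contact pictures. The only point that needs minor care is checking that $\pi(\sY_{\rm sm} \setminus \{0\})$ is (open and dense in) the smooth locus of $\BP \sY$, which is routine because $\pi$ is a $\C^*$-bundle.
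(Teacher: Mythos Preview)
Your proof is correct. The paper does not actually prove this proposition; it states it as well-known and cites Proposition~E.2 of \cite{Bu2}. Your argument via the Liouville $1$-form $\alpha = \iota_{\vec R}\,\omega$ and the identity $d\alpha = 2\omega$ is the standard one, and it closely parallels the proof the paper does give for the companion Lemma~\ref{l.Legendre} (where the same contraction identity $\vec R \lrcorner\, d\theta|_{\C^{2m}} = c\,\theta|_{\C^{2m}}$ is invoked to pass between the Lagrangian and Legendrian conditions in the affine picture).
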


Legendrian subvarieties of $(\BP V, D^{\omega})$ are studied in \cite{Bu}, \cite{Bu2} and \cite{LM}.

\begin{example}\label{e.sub} {\rm Subadjoint varieties (see Theorem 11 in \cite{LM}) are  Legendrian subvarieties of $({\BP V}, D^{\omega})$ that are homogeneous under the action of the symplectic automorphisms of $(V, \omega)$. There exists one subadjoint variety corresponding to each complex simple Lie algebra,  as  listed in Table 1 of
\cite{Bu}. For example, the twisted cubic  curve in $\BP^3$ is the subadjoint variety corresponding to the simple Lie algebra of type $G_2$.
As subadjoint varieties are projectively normal, their affine cones  become
normal Legendrian subvarieties. }
\end{example}

\begin{example}\label{e.Bu} {\rm Landsberg-Manivel \cite{LM} and Buczynski (Chapters G, H, I of \cite{Bu2})  have discovered many examples of nonsingular Legendrian subvarieties in
$\BP V$, different from the subadjoint varieties of Example \ref{e.sub}. They are not projectively normal, so the Legendrian singularities of their affine cones are not normal.}
\end{example}

For later use, we recall the following proposition, which is just a reformulation of Corollary 5.5 and Lemma 5.6 of \cite{Bu}.

\begin{proposition}\label{p.QL}
Let $(V, \omega)$ be a symplectic vector space of dimension $2m$.
Fix a symplectic coordinate system $(y_1, \ldots, y_{2m})$ on $V$ satisfying $$\omega(\frac{\partial}{\partial y_{m+i}}, \frac{\partial}{\partial y_j}) = \delta_{ij}, \ 1 \leq i, j \leq m,$$ and define a homomorphism
$$\omega^{\dagger}: H^0(\BP V, \sO(2)) \longrightarrow  H^0(\BP V, T_{\BP V})$$ by sending a homogeneous quadratic polynomial $q(y_1, \ldots, y_{2m})$
     to the vector field on $\BP V$ given by the linear vector field $$\omega^{\dagger}(q) := \sum_{k=1}^m ( \frac{\partial q}{\partial y_{m+k}}
\frac{\partial}{\partial y_k} - \frac{\partial q}{\partial y_k} \frac{\partial}{\partial y_{m+k}}).$$ Let $\BP \sY$ be a Legendrian subvariety in $(\BP V, D^{\omega})$ as in Proposition \ref{p.projLegendre}.
Then the vector field $\omega^{\dagger}(q)$ is tangent to  $\BP \sY \subset \BP V$ if and only if $\sY$ is contained in the quadric hypersurface $q=0$.  \end{proposition}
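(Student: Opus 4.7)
\medskip

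\noindent\textbf{Proof proposal.} The plan is to recognize $\omega^{\dagger}(q)$ as the projectivization of the Hamiltonian vector field of $q$ on $(V,\omega)$ and then invoke the Lagrangian condition on $\sY$. Indeed, a direct computation with the chosen symplectic coordinates shows that the linear vector field on $V$ defined by $\tilde X_q := \sum_{k=1}^m\bigl(\frac{\partial q}{\partial y_{m+k}}\frac{\partial}{\partial y_k} - \frac{\partial q}{\partial y_k}\frac{\partial}{\partial y_{m+k}}\bigr)$ satisfies $\iota_{\tilde X_q}\omega = -\,\mathrm{d}q$, and, being linear in the $y_i$, descends along $V\setminus 0 \to \BP V$ to the projective vector field $\omega^{\dagger}(q)$. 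So $\omega^{\dagger}(q)$ is tangent to $\BP\sY$ at a smooth point $[v]$ if and only if $\tilde X_q(v)$ lies in $T_{\sY,v}$ modulo $\C v$; since $\sY$ is an affine cone, the Euler vector $v$ is already in $T_{\sY,v}$, so the condition becomes simply $\tilde X_q(v)\in T_{\sY,v}$.

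For the forward/backward equivalence, I would exploit that $T_{\sY,v}$ is a \emph{Lagrangian} subspace of $(V,\omega)$ at every smooth point $v$ of $\sY$, hence maximal $\omega$-isotropic. Thus $\tilde X_q(v)\in T_{\sY,v}$ is equivalent to $\omega(\tilde X_q(v),w) = 0$ for all $w\in T_{\sY,v}$, and via the Hamiltonian identity this is equivalent to $\mathrm{d}q_v(w)=0$ for all such $w$, i.e.\ to $\mathrm{d}(q|_\sY)$ vanishing at $v$. If $\sY\subset (q=0)$, then $q|_\sY\equiv 0$ and both directions of the previous line show that $\tilde X_q$ is tangent to $\sY$ at every smooth point, giving the ``if'' direction.

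For the ``only if'' direction, assume $\omega^{\dagger}(q)$ is tangent to $\BP\sY$. Then by the equivalence above, $\mathrm{d}(q|_{\sY^{\mathrm{sm}}})\equiv 0$, so $q$ is locally constant on the smooth locus of $\sY$. Here one uses that $\sY$ is a reduced affine cone: every irreducible component is itself a cone, hence contains $0$, and its smooth locus is dense and connected in that component. So the local constant must be $q(0)=0$ on every component, i.e.\ $q$ vanishes on $\sY$, which is what we wanted.

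The only mildly delicate step is the passage from ``$\tilde X_q$ tangent to $\sY$ at a point $v$'' to ``$\omega^{\dagger}(q)$ tangent to $\BP\sY$ at $[v]$'' (and back), because the vector field on $\BP V$ is defined only up to the Euler direction; this is resolved by the cone property of $\sY$ noted above. Everything else is a one-line consequence of the Hamiltonian identity and the Lagrangian=maximal-isotropic equivalence, so no serious obstacle is expected.
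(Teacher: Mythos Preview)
Your argument is correct and is essentially the same as the one underlying the paper's proof: the identity $\iota_{\tilde X_q}\omega=-\mathrm{d}q$ is exactly the relation $Q(v,u)=\omega(A(v),u)$ that the paper records for $A=\omega^{\sharp}(Q)$, and the passage from ``$\tilde X_q$ tangent to $\sY$'' to ``$q$ vanishes on $\sY$'' via the Lagrangian (maximal isotropic) property of $T_{\sY,v}$ is the content of Corollary~5.5 and Lemma~5.6 of \cite{Bu} that the paper invokes. The only difference is presentational: the paper defers to \cite{Bu}, whereas you unpack the Hamiltonian identity and the maximal-isotropy step explicitly, and you add the (correct) endgame that $q|_{\sY^{\mathrm{sm}}}$ locally constant on a reduced cone forces $q\equiv 0$ on $\sY$.
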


\begin{proof}
The symplectic form $\omega$ gives an isomorphism $$\omega^{\sharp}: {\rm Sym}^2 V^* \cong \fsp(V) \subset \fsl(V)$$ such that for a symmetric bilinear form $Q \in {\rm Sym}^2 V^*$, the endomorphism $A:= \omega^{\sharp}(Q) \in \fsl(V)$ of $V$ sends $ v \in V$ to $A(v) \in V$ satisfying   $Q(v,u) = \omega(A(v), u)$ for all $u \in V$.  Corollary 5.5 and Lemma 5.6 of \cite{Bu} say that $\omega^{\sharp}$ identifies quadrics vanishing on $\sY$ with elements of $\fsp(V)$ which are tangent to $\sY$.  Our homomorphism $\omega^{\dagger}$ is, up to a scalar multiple,  just an expression of $\omega^{\sharp}$ in terms of linear vector fields on $V$, thus the proposition follows.
 \end{proof}

\section{Legendrian singularities with reduced tangent cones}\label{s.reduced}

In this section, we prove Theorem \ref{t.cone}.
The following local result in contact geometry is a key step of the proof.

\begin{theorem}\label{t.local}
In Notation \ref{n.standard}, let ${\bf m}_0$ be the maximal ideal of
the local ring $\sO_{\C^{2m+1}, 0}.$ Then the germ of a  hypersurface at $0$  defined by an equation   of the form $$x_{2m+1}  = h(x_1, \ldots, x_{2m}), \ h \in {\bf m}_0^3$$
is contactomorphic to the germ of the hyperplane $x_{2m+1} =0$ at $0$. \end{theorem}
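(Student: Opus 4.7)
The strategy is a contact-geometric Moser trick. Interpolate between the two hypersurfaces by
$F_t := \{f_t = 0\}$ with $f_t := x_{2m+1} - t\, h(x_1, \ldots, x_{2m})$ for $t \in [0, 1]$, so that $F_0$ is the coordinate hyperplane and $F_1$ is the given hypersurface. I will construct a holomorphic family of germs of contactomorphisms $\varphi_t$ fixing $0$ with $\varphi_0 = \mathrm{id}$ and $\varphi_t(F_0) = F_t$; the map $\varphi_1$ will then be the desired contactomorphism.

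Seek $\varphi_t$ as the flow of a time-dependent contact vector field $V_t$. By Theorem \ref{t.Kobayashi}, every such field has the form $V_t = v^{g_t}$ for a holomorphic contact Hamiltonian $g_t$. Differentiating the relation $\varphi_t^* f_t = c_t\, x_{2m+1}$ in $t$ (the usual Moser move), the condition $\varphi_t(F_0) = F_t$ reduces to
$$v^{g_t}(f_t) \;\equiv\; h \pmod{f_t}.$$
I restrict attention to Hamiltonians $g_t = g_t(x_1, \ldots, x_{2m})$ independent of $x_{2m+1}$. For such $g_t$, the explicit formula of Theorem \ref{t.Kobayashi} simplifies and makes both sides of the congruence independent of $x_{2m+1}$, reducing it to the honest equation
$$(\vec{R} - 2)(g_t) \;-\; t\,\{g_t, h\} \;=\; 2h,$$
where $\vec{R} := \sum_{k=1}^{2m} x_k\, \partial_{x_k}$ is the radial field on $\C^{2m}$ and $\{\cdot, \cdot\}$ is the Poisson bracket induced by ${\rm d}\theta|_{\C^{2m}}$.

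Solve this equation grade-by-grade. Writing $h = \sum_{d \geq 3} h_d$ (allowed since $h \in {\bf m}_0^3$) and $g_t = \sum_{d \geq 3} g_{t, d}$ in homogeneous components, the equation at degree $d \geq 3$ becomes
$$(d - 2)\, g_{t, d} \;=\; 2\, h_d \;+\; t \sum_{e + f = d + 2,\; e, f \geq 3} \{g_{t, e}, h_f\},$$
whose right-hand side depends only on $g_{t, e}$ with $e \leq d - 1$. Since $d - 2 \geq 1$, this triangular recursion has a unique formal power series solution that is polynomial in $t$. To upgrade it to a convergent solution I recast the problem as the fixed-point equation
$$g_t \;=\; 2\,(\vec{R} - 2)^{-1} h \;+\; t\,(\vec{R} - 2)^{-1}\{g_t, h\}$$
in a Banach space of holomorphic functions vanishing to third order on a small polydisk. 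Here $(\vec{R} - 2)^{-1}$ is bounded (it divides the degree-$d$ piece by $d - 2 \geq 1$) and, since $h \in {\bf m}_0^3$ the nonlinear term $\{\cdot, h\}$ has small operator norm (via Cauchy estimates on a sufficiently small polydisk); a Banach contraction argument then produces a holomorphic $g_t$ depending holomorphically on $t \in [0, 1]$.

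Once $g_t$ is in hand, the containment $g_t \in {\bf m}_0^3$ forces $V_t = v^{g_t}$ to vanish to order at least two at the origin, so its flow exists on a common neighborhood of $0$ for all $t \in [0, 1]$. Each $\varphi_t$ is a contactomorphism by Theorem \ref{t.Kobayashi}(v), and by construction $\varphi_t(F_0) = F_t$, so $\varphi_1$ realizes the contactomorphism asserted in the theorem. The main obstacle is the convergence step — promoting the formal $g_t$ to a genuine holomorphic solution on a neighborhood uniform in $t$ — which is the familiar analytic heart of Moser-type arguments.
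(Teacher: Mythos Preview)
Your Moser-path approach is sound and genuinely different from the paper's argument. The paper does not interpolate: it studies the single contact Hamiltonian vector field $v^{f}$ (with $f=h-x_{2m+1}$) restricted to the hypersurface $H$, observes that its linear part is the Euler field with all eigenvalues equal to $1$, and invokes Poincar\'e's linearization theorem to straighten $v^{f}|_{H}$ to the radial field. This matches the Cauchy-characteristic foliations of $H$ and of the flat hyperplane $X=\{x_{2m+1}=0\}$; both descend to contact structures on $\BP D_{0}\cong\BP^{2m-1}$, which are then identified via the classification of contact structures on projective space, and the relative Darboux theorem finishes the job. Your route bypasses Poincar\'e linearization, the projective-space step, and relative Darboux entirely, trading them for the direct construction of a time-dependent contact Hamiltonian; conversely, the paper's argument is more geometric and makes visible the role of the Cauchy characteristics, which is used again later in the proof of Theorem~\ref{t.cone}.

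Two points deserve tightening. First, the term $\{g_t,h\}$ is \emph{linear} in $g_t$, so what you are really doing is inverting $I-tT$ with $T=(\vec{R}-2)^{-1}\{\,\cdot\,,h\}$ by a Neumann series, not running a nonlinear contraction. Second, and more importantly, your justification for the smallness of $T$ is not quite right as written: the bracket $\{\,\cdot\,,h\}$ involves a derivative of $g$ and is \emph{unbounded} on any fixed polydisk, so you cannot bound $(\vec{R}-2)^{-1}$ and $\{\,\cdot\,,h\}$ separately. What makes the argument work is that the composite $T$ is bounded: on homogeneous pieces, $\{g_e,h_f\}$ has degree $e+f-2$, the derivative of $g_e$ costs a factor $\sim e$, and $(\vec{R}-2)^{-1}$ then divides by $e+f-4$; since $e/(e+f-4)\le e/(e-1)\le 3/2$ for $e,f\ge 3$, the loss is absorbed and only a convergent factor $\sum_{f\ge 3} f\,r^{\,f-2}\|h_f\|$ remains, which is small for small $r$. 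With this correction (or an equivalent majorant-series estimate) the convergence step goes through and your proof is complete. It is worth noting that this estimate and the paper's use of Poincar\'e linearization rest on the same non-resonance: the eigenvalues of the linearized field are all $1$, which is exactly why $d-2\ne 0$ on ${\bf m}_0^3$.
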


To prove Theorem \ref{t.local}, we use the following two  classical results. The first one is Poincar\'e's result  on the normal forms of holomorphic vector fields (see Ch.4, Sec. 2.1 in \cite{AI}) and the second one is Arnold-Givental's relative Darboux theorem (see Ch. 4, Section 1.3, Theorem A in \cite{AG} or Theorem 1.1 in \cite{Zh}).

\begin{theorem}\label{t.Poincare}
Let $\vec{v}$ be a germ of holomorphic vector fields at the origin in $\C^d$ that vanishes at the origin.
Assume that the eigenvalues $(\lambda_1, \ldots,\lambda_d)$ of the linear part of $\vec{v}$ satisfy the non-resonant condition
that the convex hull of $\{ \lambda_1, \ldots, \lambda_d \} \subset \C$ does not contain $0$. Then $\vec{v}$ can be expressed as  a linear vector
field $\sum_{i=1}^d \lambda_i w_i \frac{\partial}{\partial w_i}$ under suitable holomorphic coordinates $(w_1, \ldots, w_d)$ in a
neighborhood of the origin in $\C^d$. \end{theorem}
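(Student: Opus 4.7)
The plan is to construct a formal biholomorphism $\Phi(z) = z + O(|z|^2)$ conjugating $\vec v$ to its linear part $L := \sum_i \lambda_i z_i \frac{\partial}{\partial z_i}$, and then to establish its convergence using the Poincar\'e-domain hypothesis that the convex hull of $\{\lambda_1,\ldots,\lambda_d\}$ avoids $0$. First I would perform a linear change of coordinates diagonalizing the linear part of $\vec v$ (the Jordan case being handled by a small perturbation that preserves the hypothesis, or by working with the block-diagonal form); then write $\vec v = L + R$ with $R(z) = \sum_{|n|\ge 2} R_n z^n$, $R_n \in \C^d$, and seek $\Phi = \mathrm{id} + \phi$ with $\phi = \sum_{|n|\ge 2} \phi_n z^n$ satisfying $(\mathrm d\Phi)(L) = \vec v \circ \Phi$.

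Comparing homogeneous components of degree $k \ge 2$ yields the homological equation
\[
[L,\phi^{(k)}] \;=\; R^{(k)} \;+\; P^{(k)}\!\bigl(\phi^{(2)},\ldots,\phi^{(k-1)},\, R^{(2)},\ldots, R^{(k-1)}\bigr),
\]
where $\phi^{(k)}$, $R^{(k)}$ are the degree-$k$ components and $P^{(k)}$ is an explicit polynomial in lower-order data. The operator $[L,\cdot]$ acts diagonally on the monomial basis $z^n\frac{\partial}{\partial z_j}$ with eigenvalue $\mu_{n,j}:=\langle n,\lambda\rangle-\lambda_j$, and the non-resonance hypothesis (which I read as $\mu_{n,j}\ne 0$ for every $j$ and every $n$ with $|n|\ge 2$) makes $[L,\cdot]$ invertible on the relevant subspace. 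This lets one solve for $\phi^{(k)}$ inductively, producing a unique formal power series $\Phi$.

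The main obstacle is convergence, and this is where the Poincar\'e-domain condition plays its essential role. Because the convex hull of $\{\lambda_i\}$ avoids $0$, there exists an $\R$-linear form $\ell\colon \C\to\R$ with $\ell(\lambda_i)\ge c>0$ for all $i$; hence $\ell(\langle n,\lambda\rangle)\ge c|n|$ and $|\mu_{n,j}| \ge c|n| - \max_j |\lambda_j|$. Combined with non-resonance, this yields a uniform lower bound $|\mu_{n,j}|\ge c'>0$ valid for all $j$ and all $|n|\ge 2$, with no small-divisor accumulation. I would then run the Cauchy majorant method: fix $\rho>0$ with $\|R_n\|\le M\rho^{-|n|}$, and show that the recursion for $\phi$ is dominated, coefficient by coefficient, by the formal solution of a scalar functional equation of the form
\[
\widehat\phi(t) \;=\; \frac{K\bigl(t+\widehat\phi(t)\bigr)^{2}}{1 - K\bigl(t+\widehat\phi(t)\bigr)}
\]
for a constant $K$ absorbing both $M$ and $1/c'$; the implicit function theorem applied at $(t,\widehat\phi)=(0,0)$ exhibits $\widehat\phi$ as a function holomorphic near $0$, giving geometric decay of its Taylor coefficients.

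Extracting coefficients, one obtains $\|\phi_n\|\le C\rho_1^{-|n|}$ for some $\rho_1 > 0$, so $\Phi$ converges on a neighborhood of the origin; the inverse $w := \Phi^{-1}(z)$ is then the required holomorphic coordinate in which $\vec v = \sum_i \lambda_i w_i \frac{\partial}{\partial w_i}$. The delicate point throughout is the passage from the formal to the analytic category, which rests entirely on the uniform bound on $|\mu_{n,j}|^{-1}$ supplied by the Poincar\'e hypothesis; without it, the majorant would no longer dominate and convergence would require the finer Diophantine estimates of Siegel type.
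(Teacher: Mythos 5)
The paper does not actually prove this statement: it is imported as a classical theorem of Poincar\'e with a pointer to Ch.~4, Sec.~2.1 of \cite{AI}, and your argument is essentially the standard proof found there --- formal linearization by solving the homological equation $[L,\phi^{(k)}]=\cdots$ degree by degree, the operator acting diagonally with eigenvalues $\mu_{n,j}=\langle n,\lambda\rangle-\lambda_j$, followed by convergence via Cauchy majorants using the Poincar\'e-domain condition to keep the divisors uniformly away from zero. So there is no divergence of approach to report; the proposal is a correct sketch of the intended proof. Two points deserve care. First, the convex-hull condition alone does not imply $\mu_{n,j}\neq 0$ for all $|n|\ge 2$ (take $\lambda=(1,2)$ and the resonance $2\lambda_1=\lambda_2$), so your decision to \emph{read} non-resonance as part of the hypothesis is in fact necessary for the statement to be literally true; what the convex-hull condition does give is that only finitely many pairs $(n,j)$ can possibly be resonant, which is precisely what upgrades non-resonance to the uniform lower bound $|\mu_{n,j}|\ge c'>0$ your majorant needs. (In the paper's sole application, in the proof of Theorem \ref{t.local}, all eigenvalues equal $1$, so no resonances occur.) Second, disposing of a non-semisimple linear part by ``a small perturbation'' is not legitimate, since you cannot perturb the given vector field; the standard fix is your other suggestion, namely conjugating the Jordan blocks so that the nilpotent part has arbitrarily small norm, after which $[L,\cdot]$ is triangular with the same diagonal entries $\mu_{n,j}$ and stays invertible with controlled inverse. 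With these two clarifications the proposal is correct.
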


\begin{theorem}\label{t.relDarboux}
In Notation \ref{n.standard}, let $X, H$ be two germs of complex submanifolds   at $0$ of the same dimension. Assume there exists a biholomorphic map $\varphi: X \to H$ such that
$$\varphi (0) = 0 \mbox{ and } \varphi^* \theta|_{H}  = g \cdot \theta|_{X}$$ for some nowhere-vanishing holomorphic function $g \in \sO_{X, 0}^*$ on $X$. Then the germ of $X$ at $0$ and the germ of $H$ at $0$ are contactomorphic. \end{theorem}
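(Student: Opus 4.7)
The plan is the holomorphic version of Moser's trick. First I would extend $\varphi$ to an ambient holomorphic biholomorphism $\Phi \colon (U_1, 0) \to (U_2, 0)$ of neighborhoods of $0$ in $\C^{2m+1}$ with $\Phi|_X = \varphi$, and set $\theta_1 := \Phi^*\theta$, a contact form on $U_1$ satisfying $\theta_1|_X = \varphi^*(\theta|_H) = g\,\theta|_X$. Extending $g$ to a nowhere-vanishing holomorphic function $\tilde g$ on a neighborhood of $0$ in $U_1$ and replacing $\theta_1$ by $\tilde g^{-1}\theta_1$, I may assume $\theta_1|_X = \theta|_X =: \theta_0|_X$; this rescaling preserves the underlying contact structure. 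The task then reduces to finding an ambient biholomorphism $\psi \colon (U_1, 0) \to (U_1, 0)$ with $\psi|_X = \mathrm{id}_X$ and $\psi^*\theta_1 = f\,\theta_0$ for some $f \in \sO_{U_1, 0}^*$, for then $\Phi \circ \psi$ is the desired contactomorphism between the germs of $(X, 0)$ and $(H, 0)$.

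To build $\psi$, I would form the Moser homotopy $\theta_t := (1-t)\theta_0 + t\,\theta_1$ for $t$ in a neighborhood of $[0,1] \subset \C$; contact nondegeneracy being an open condition, every $\theta_t$ is contact after shrinking $U_1$. I seek a holomorphic time-dependent vector field $v_t$ on $U_1$ with $v_t|_X = 0$ solving the Moser equation
\begin{equation*}
{\rm Lie}_{v_t}\theta_t + (\theta_1 - \theta_0) \;=\; \lambda_t\,\theta_t
\end{equation*}
for some holomorphic function $\lambda_t$. Granting such $v_t$, holomorphic ODE theory produces a flow $\psi_t$ on a smaller neighborhood of $0$, for $t$ in a neighborhood of $[0,1]$; it fixes $X$ pointwise and satisfies $\psi_t^*\theta_t = f_t\,\theta_0$ with $f_t \in \sO_{U_1, 0}^*$, so $\psi := \psi_1$ does the job.

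The crux is to solve the Moser equation with $v_t$ vanishing on $X$. Writing $\alpha := \theta_1 - \theta_0$, the condition $\theta_0|_X = \theta_1|_X$ gives $\alpha|_X = 0$. In local coordinates $(x,y)$ with $X = \{y = 0\}$, expanding $\alpha = \sum A_i\,{\rm d}x_i + \sum B_j\,{\rm d}y_j$ one has $A_i(x,0) = 0$, and the identity $B_j\,{\rm d}y_j = {\rm d}(y_j B_j) - y_j\,{\rm d}B_j$ yields a decomposition
\begin{equation*}
\alpha \;=\; {\rm d}h + \beta,
\end{equation*}
with $h := \sum_j y_j B_j$ vanishing on $X$ and $\beta := \sum_j y_j\,\eta_j$ vanishing pointwise along $X$ as a $1$-form. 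Let $R_t$ be the Reeb vector field of $\theta_t$ and $D_t := \ker\theta_t$. The ansatz $v_t = -h\,R_t + w_t$ with $w_t \in D_t$ makes $\theta_t(v_t) = -h$ automatic, and Cartan's formula reduces the Moser equation to $i_{w_t}{\rm d}\theta_t + \beta = \lambda_t\,\theta_t$; pairing with $R_t$ yields $\lambda_t = \beta(R_t)$, while the nondegeneracy of ${\rm d}\theta_t$ on $D_t$ determines $w_t \in D_t$ uniquely by $i_{w_t}{\rm d}\theta_t|_{D_t} = -\beta|_{D_t}$. Since $\beta$ vanishes pointwise on $X$, so does $w_t$, and together with $h|_X = 0$ this forces $v_t|_X = 0$, as required. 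The main obstacle in the plan is precisely arranging this vanishing; the relative Poincar\'e--type decomposition $\alpha = {\rm d}h + \beta$ with both pieces lying in the ideal of $X$ is what makes it automatic.
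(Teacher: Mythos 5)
First, a point of comparison: the paper does not actually prove Theorem \ref{t.relDarboux}; it quotes it as a classical result of Arnold--Givental and Zhitomirskii and uses it as a black box, so there is no internal proof to measure yours against. Your outline is the standard relative Moser/Gray argument, and its second half is essentially correct: the decomposition $\alpha = {\rm d}h + \beta$ with $h$ in the ideal of $X$ and $\beta$ vanishing pointwise along $X$, the ansatz $v_t = -h\,R_t + w_t$ with $w_t \in D_t$, the computation $\lambda_t = \beta(R_t)$, and the resulting vanishing of $v_t$ along $X$ are exactly the right relative Poincar\'e-type input, and the integration of the flow and the conclusion $\psi_t^*\theta_t = f_t\,\theta_0$ are fine granted the earlier steps.

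The genuine gap is the sentence ``contact nondegeneracy being an open condition, every $\theta_t$ is contact after shrinking $U_1$.'' Shrinking $U_1$ cannot help at the point $0$ itself, which lies on $X$ and remains in every neighborhood, and the hypothesis only controls the pullbacks $i_X^*\theta_0 = i_X^*\theta_1$: it says nothing about how the covectors $\theta_0(0)$, $\theta_1(0)$ on the full ambient tangent space, or the $2$-forms ${\rm d}\theta_0(0)$, ${\rm d}\theta_1(0)$, compare. The set of contact forms is not convex. For instance, with $X = H = \{0\}$ a suitable linear $\Phi$ gives $\theta_1(0) = -\theta_0(0)$, so $\theta_{1/2}(0) = 0$; and even when $\theta_t(0) \neq 0$ for all $t$, one can arrange ${\rm d}\theta_1(0)|_{D_0} = -\,{\rm d}\theta_0(0)|_{D_0}$, so that ${\rm d}\theta_{1/2}$ degenerates on $\ker\theta_{1/2}(0)$. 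Without contactness of $\theta_t$ the Reeb field $R_t$ and the inverse of ${\rm d}\theta_t|_{D_t}$ do not exist and your Moser equation cannot be solved. This is precisely where the classical proofs do extra work: one first normalizes $\Phi$ further so that $\theta_1 = \theta_0$ and ${\rm d}\theta_1 = {\rm d}\theta_0$ hold pointwise at every point of $X$ (a fibrewise linear-algebra step, using transitivity of the linear contact group, followed by extension off $X$), after which $\theta_t = \theta_0$ along $X$ and nondegeneracy holds on a uniform neighborhood for all $t \in [0,1]$ by compactness. Alternatively, in the holomorphic category you can avoid the normalization by replacing the segment $[0,1]$ with a path in $\C$ from $0$ to $1$ avoiding the finite zero set of the polynomial $t \mapsto \bigl(\theta_t \wedge ({\rm d}\theta_t)^m\bigr)(0)$, which is nonzero at $t=0$ and $t=1$; your decomposition of $\alpha$ and the vanishing of $v_t$ on $X$ survive this reparametrization unchanged. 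Either repair completes your argument.
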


\begin{proof}[Proof of Theorem \ref{t.local}]
Put $f := h- x_{2m+1}$ and let $H$ be the germ at $0$ of the hypersurface defined by $f=0$.
The expression for $v^f$ in Theorem \ref{t.Kobayashi} gives \begin{eqnarray*} 2 v^f & =& \sum_{k=1}^m (\frac{\partial h}{\partial x_{m+k}} + x_{k}) \frac{\partial}{\partial x_k} + \sum_{k=1}^m (-\frac{\partial h}{\partial x_k} + x_{m+k}) \frac{\partial}{\partial x_{m+k}} \\ & & + \left( \sum_{k=1}^m (\frac{\partial h}{\partial x_k} x^k + \frac{\partial h}{\partial x_{m+k}} x_{m+k}) - 2f \right) \frac{\partial}{\partial x_{2m+1}}. \end{eqnarray*}
  Choose coordinates $z_i = x_i|_{H}, 1 \leq i \leq 2m,$ on $H$ such that
 \begin{equation}\label{e.z} \frac{\partial}{\partial z_i} =
  \frac{\partial}{\partial x_i} + \frac{\partial h}{\partial x_i} \frac{\partial}{\partial x_{2m+1}}, \ 1 \leq i \leq 2m. \end{equation}
Viewing $h|_H$ as a function $h(z_1, \ldots, z_{2m})$  and using (\ref{e.z}), the restriction of
$2 v^f$ to the hypersurface $H$ becomes 
\begin{equation}
\ \ 2 v^f|_H = \sum_{k=1}^m \left( (\frac{\partial h}{\partial z_{m+k}} + z_k) \frac{\partial }{\partial z_k} + (-\frac{\partial h}{\partial z_k} + z_{m+k}) \frac{\partial}{\partial z_{m+k}} \right). \end{equation}
   Since $h \in {\bf m}_0^3$,  the linear part of $2v^{f}|_H$ has eigenvalues
 $\lambda_1 = \cdots = \lambda_{2m} = 1.$ By Theorem \ref{t.Poincare}, we can find a new coordinate system $(w_1, \ldots, w_{2m})$ on a neighborhood of $0$ in
$H$ such that up to replacing $H$ by a smaller open subset, $$2v^{f}|_{H} = \sum_{k=1}^m ( w_k  \frac{\partial}{\partial w_k} + w_{m+k} \frac{\partial}{\partial w_{m+k}}).$$ In particular, the vector field $v^f$ does not vanish on $H \setminus 0$. By Theorem \ref{t.Kobayashi} (iii), this implies that the restriction $D|_{H \setminus 0}$ is  a vector subbundle of $T_{H\setminus 0}$ with rank $2m-1$.

Let $X$ be the germ at $0$ of the hyperplane $x_{2m+1} =0$ in $\C^{2m+1}$.
Consider the biholomorphic map $\varphi$ from  $X$  to $H$
defined by $\varphi^* w_i = x_i, 1 \leq i \leq 2m.$ Then $\varphi$ sends  the vector field $$2v^{-x_{2m+1}}|_X = \sum_{k=1}^m ( x_k  \frac{\partial}{\partial x_k} + x_{m+k} \frac{\partial}{\partial x_{m+k}})$$  to  the vector field $2v^f|_H$.  Let $\alpha: {\rm Bl}_0(X) \to X$ and
$\beta: {\rm Bl}_0(H) \to H$ be the blowups of the hypersurfaces at $0$. We have
submersions $$\alpha': {\rm Bl}_0(X) \to \BP D_0 \mbox{ and } \beta': {\rm Bl}_0(H) \to \BP D_0$$ whose fibers are the leaves of the radial vector fields
$v^{-x_{2m+1}}|_X$ and $v^{f}|_H$, respectively. By Theorem \ref{t.Kobayashi} (vi) and Lemma \ref{l.Cauchy}, the distribution $D|_{X \setminus 0}$ descends by $\alpha'$ to a contact structure $D^X$ on $\BP D_0$ and the distribution $D|_{H \setminus 0}$ descends by $\beta'$ to a contact structure $D^H$ on $\BP D_0$. Recall that any two contact structures on $\BP^{2m-1}$ are related by a projective linear transformation (Proposition 2.3 in \cite{LB2}). Thus by  a linear coordinate change of $w_1, \ldots, w_{2m}$, we may assume that $D^X = D^H$, i.e.,  the biholomorphism $\varphi_o := \varphi_{X\setminus 0}$ sends $D|_{X \setminus 0}$ to $ D|_{H \setminus 0}$. Then
$$\varphi_o^* (\theta|_{H \setminus 0} ) = g \cdot \theta|_{X \setminus 0}$$ for some nowhere-vanishing holomorphic function $g$ on $X \setminus 0$. By Hartogs extension, we can assume that $g$ is a nowhere-vanishing holomorphic function on $F$ such that $$\varphi^* (\theta|_{H} ) = g \cdot \theta|_{X}.$$ So the condition of Theorem \ref{t.relDarboux} is satisfied and
the germs of $X$ and $H$ at $0$ are contactomorphic.  \end{proof}

To prove Theorem \ref{t.cone}, we need the following two propositions.

\begin{proposition}\label{p.pTanCone}
Let $0 \in Z \subset (\C^{2m+1}, D=(\theta=0))$ be a Legendrian singularity whose projectivized tangent cone $\BP TC_{Z,0} \subset \BP T_{\C^{2m+1}, 0}$ is reduced.
Then $\BP TC_{Z,0}$ is contained in $\BP D_0 \subset \BP T_{\C^{2m+1}}.$  \end{proposition}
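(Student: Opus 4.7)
The plan is to exploit the scaling action on $\C^{2m+1}$ to degenerate $Z$ to $TC_{Z,0}$ while tracking what happens to the contact form $\theta$. For $s \in \Delta$ let $\mu_s \colon \C^{2m+1} \to \C^{2m+1}$ denote multiplication by $s$, and consider the standard flat degeneration $\sZ \subset \C^{2m+1} \times \Delta$ of $Z$ to its tangent cone, for which $\sZ_s = \mu_s^{-1}(Z) = s^{-1} Z$ for $s \neq 0$ and $\sZ_0 = TC_{Z,0}$. The essential computation uses only $\mu_s^*\, dx_i = s\, dx_i$ at fixed $s$ and gives
\[
\mu_s^* \theta = s^2 \sum_{i=1}^m (x_{m+i}\,dx_i - x_i\,dx_{m+i}) - s\, dx_{2m+1},
\]
so the normalized form
\[
\Theta_s := \tfrac{1}{s}\mu_s^*\theta = s \sum_{i=1}^m (x_{m+i}\,dx_i - x_i\,dx_{m+i}) - dx_{2m+1}
\]
extends holomorphically in $s$ across $s = 0$ with limit $\Theta_0 = -dx_{2m+1}$. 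Geometrically this limit is exactly the $1$-form that cuts out the contact hyperplane $D_0$, so the plan is to show that $\Theta_0$ vanishes on $TC_{Z,0}$ at its smooth points.

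For each $s \neq 0$, since $\mu_s \colon \sZ_s \to Z$ is a biholomorphism and $\theta$ vanishes on the smooth locus of $Z$, one has $\Theta_s|_{(\sZ_s)_{\mathrm{sm}}} = 0$. I would then pick a point $[v] \in \BP TC_{Z,0}$ on a reduced irreducible component $C$. Because $\BP TC_{Z,0}$ is reduced, any embedded components of $TC_{Z,0}$ are supported at the origin, so a generic representative $v \in C \setminus \{0\}$ is a smooth point of the scheme $\sZ_0 = TC_{Z,0}$. By flatness of $\sZ \to \Delta$, the total space $\sZ$ is then smooth at $(v, 0)$ and the projection to $\Delta$ is a submersion there. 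Choose local coordinates $(s, t_1, \ldots, t_m)$ on $\sZ$ with $s$ the base coordinate and expand $\Theta|_{\sZ} = A\, ds + \sum_j B_j\, dt_j$; the identity $\Theta_s|_{\sZ_s} = 0$ for $s \neq 0$ forces each $B_j$ to vanish on the open set $\{s \neq 0\}$ of $\sZ$, hence identically near $(v, 0)$ by holomorphy. Evaluating at $s = 0$ and restricting to $T_{\sZ_0, v}$, where $ds = 0$, one obtains $dx_{2m+1}|_{T_{TC_{Z,0}, v}} = 0$.

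To finish, I use that $TC_{Z,0}$ is an affine cone, so the radial Euler vector field is tangent to its smooth locus; in particular $v \in T_{TC_{Z,0}, v}$, whence $x_{2m+1}(v) = dx_{2m+1}(v) = 0$ and $v \in D_0$. Since this holds on a Zariski-dense open subset of each reduced irreducible component of $TC_{Z,0}$, the entire reduced tangent cone is contained in $D_0$, giving $\BP TC_{Z,0} \subset \BP D_0$. The main technical delicacy is the continuity step passing from Legendrian vanishing at $s \neq 0$ to vanishing at $s = 0$; it is exactly here that the flatness of the standard degeneration and the existence of smooth points of the total space along $\sZ_0$, both of which rest on the reducedness hypothesis for $\BP TC_{Z,0}$, become crucial.
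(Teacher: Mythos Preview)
Your argument is correct, but it follows a genuinely different path from the paper's proof. The paper gives a very short direct argument: it identifies $\BP TC_{Z,0}$ with the exceptional divisor of the blowup ${\rm Bl}_0(Z)$, so that each point of $\BP TC_{Z,0}$ is the limiting tangent direction of an arc $\{x_t\in Z\}$ with $x_0=0$ and $x_t$ smooth for $t\neq 0$; since $T_{Z,x_t}\subset D_{x_t}$ for $t\neq 0$, the limit direction lies in $\BP D_0$, and reducedness of $\BP TC_{Z,0}$ then forces the whole scheme into $\BP D_0$.

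Your approach instead runs the standard flat degeneration $\sZ\to\Delta$ of $Z$ to its tangent cone and tracks the rescaled contact form $\Theta_s=\tfrac{1}{s}\mu_s^*\theta$, which you correctly show extends across $s=0$ with limit $-dx_{2m+1}$. The continuity step (vanishing of the $B_j$'s on $\{s\neq 0\}$, hence everywhere) is valid once you know $\sZ$ is smooth at $(v,0)$; this is where the reducedness hypothesis enters, and your reduction ``$\BP TC_{Z,0}$ reduced $\Rightarrow$ $TC_{Z,0}\setminus\{0\}$ reduced'' is correct because the cone ring is generated in degree~$1$, so $R_{x_i}\cong R_{(x_i)}[x_i,x_i^{-1}]$ and reducedness passes from Proj to the punctured cone. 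The final Euler-vector step is clean. Compared with the paper, your argument is heavier in machinery (flatness, smoothness of the total space) but yields slightly more: you actually get $dx_{2m+1}|_{T_{TC_{Z,0},v}}=0$ at generic $v$, i.e.\ the reduced tangent cone lies in the hyperplane $x_{2m+1}=0$, not merely that each of its points does. This degeneration viewpoint also resonates with the weighted-homogeneity considerations used elsewhere in the paper (cf.\ the proof of Theorem~\ref{t.torsion2}) and could be pushed further toward Proposition~\ref{p.TanCone}.
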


\begin{proof}
 Recall that the projectivized tangent cone  $\BP TC_{Z,0}$ is the exceptional divisor of the blowup
of $Z$ at $0$ (see e.g. \cite{Mu} Ch.3, Sec.3).  Thus we can realize each point of $\BP TC_{Z,0}$ as the limit of the
tangent lines to an arc $\{  x_t \in Z, t \in \Delta\}$, where $\Delta \subset \C$ is an open neighborhood of $0 \in \C$,  such that $x_0 = 0 \in Z$ and $x_t$ is a nonsingular point of $Z$ if $ t\in \Delta \setminus \{0\}$  (see Exercise 20.3 in \cite{Ha}). Since $Z$ is Legendrian, $T_{Z, x_t} \subset D_{x_t}$ for all $t \neq 0$. It follows that the limit of the tangent lines to the arc at $t=0$ is contained in $\BP D_0$.
Thus each (closed) point of $\BP TC_{Z,0}$ is contained in $\BP D_0$. As $\BP TC_{Z,0}$ is reduced, this implies the proposition. \end{proof}

\begin{proposition}\label{p.TanCone}
\extendspace{Let $0 \in Z \subset (\C^{2m+1}, D=(\theta=0))$ be a Legendrian singularity whose tangent cone} $TC_{Z,0} \subset T_{\C^{2m+1}, 0}$ is reduced. Then the tangent cone $TC_{Z,0}$ is contained in $D_0 \subset T_{\C^{2m+1}, 0}$ and is a Lagrangian cone with respect to the symplectic form $\omega= {\rm d} \theta |_{D_0}$. Consequently, the projectivized tangent cone $\BP TC_{Z,0}$ is a Legendrian subvariety of $\BP D_0$ with respect to $D^{\omega}$.  \end{proposition}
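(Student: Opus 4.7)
The first assertion $TC_{Z,0}\subset D_0$ is almost immediate from Proposition~\ref{p.pTanCone}: reducedness of the affine tangent cone implies reducedness of its projectivization $\BP TC_{Z,0}$ by Definition~\ref{d.cone}, so $\BP TC_{Z,0}\subset \BP D_0$, and hence $TC_{Z,0}\subset D_0$ because both are cones with apex at the origin. For the Lagrangian property the plan is to work on the blow-up $\pi:\tilde M\to \C^{2m+1}$ of the origin, with exceptional divisor $E\cong \BP T_{\C^{2m+1},0}$ and strict transform $\tilde Z\subset \tilde M$ satisfying $\tilde Z\cap E = \BP TC_{Z,0}$ as a reduced scheme. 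At a generic smooth point $[v]\in \BP TC_{Z,0}$, reducedness of the affine tangent cone ensures that $\tilde Z$ is smooth at $[v]$, and in suitable local coordinates $(\sigma,y_1,\ldots,y_{m-1})$ on $\tilde Z$ with $E\cap \tilde Z=\{\sigma=0\}$, the composition with $\pi$ has the Taylor expansion
$$\pi(\sigma,y) \;=\; \sigma\,V(y) + \sigma^2\,W(y) + \sigma^3\,U(y) + \cdots ,$$
where $V$ is a local holomorphic section of the fibration $TC_{Z,0}\setminus\{0\}\to \BP TC_{Z,0}$ with $V([v])=v$.

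Since $Z$ is Legendrian and $\tilde Z$ is smooth near $[v]$, the pullbacks $\pi^*\theta$ and $\pi^*d\theta$ vanish identically on $\tilde Z$ in a neighborhood of $[v]$: they vanish on the dense open preimage of $Z_{sm}$, and sheaves of forms on a smooth complex manifold are torsion-free. Matching coefficients of $\sigma$, and using $V_{2m+1}\equiv 0$ (which is the inclusion $TC_{Z,0}\subset D_0$), the $\sigma$-coefficient of $\pi^*\theta(\partial_\sigma)$ forces $W_{2m+1}\equiv 0$; the $\sigma^2$-coefficient of $\pi^*\theta(\partial_{y_j})$ yields $\omega\bigl(V(y),\partial_{y_j}V(y)\bigr)=0$; and the $\sigma^2$-coefficient of $\pi^*d\theta(\partial_{y_j},\partial_{y_k})$ yields $\omega\bigl(\partial_{y_j}V(y),\partial_{y_k}V(y)\bigr)=0$, where $\omega=d\theta|_{D_0}$. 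Evaluating at $y=[v]$, the vectors $v=V([v])$ and $\partial_{y_1}V([v]),\ldots,\partial_{y_{m-1}}V([v])$ together span $T_v TC_{Z,0}$ — the first being the radial direction of the cone, the others lifting a basis of $T_{[v]}\BP TC_{Z,0}$ — and they are pairwise $\omega$-orthogonal by the identities above. Hence $T_v TC_{Z,0}$ is an $m$-dimensional isotropic subspace of the $2m$-dimensional symplectic space $(D_0,\omega)$, that is, Lagrangian. Since isotropy is a Zariski-closed condition on the Grassmannian of $m$-planes and holds on a dense open of the smooth locus of $TC_{Z,0}$, it holds on the entire smooth locus, so $TC_{Z,0}$ is a Lagrangian cone. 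The concluding sentence of the proposition then follows from Proposition~\ref{p.projLegendre}.

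The step I expect to be the hardest is the assertion that $\tilde Z$ is smooth at a generic smooth point of $\BP TC_{Z,0}$: it is precisely here that reducedness of the \emph{affine} tangent cone (and not merely of its projectivization) plays a role. Should the resolution-theoretic step prove awkward, one can bypass the strict transform altogether by directly producing, via a lifting argument from the fact that $v$ is a smooth point of the reduced tangent cone, an analytic two-parameter family $z(\sigma,y)\in Z$ with $z(\sigma,y)=\sigma V(y)+\sigma^2 W(y)+\cdots$ and $z(\sigma,y)\in Z_{sm}$ for $\sigma\ne 0$; the same coefficient-matching can then be carried out along $z$ without reference to $\tilde Z$.
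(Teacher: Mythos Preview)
Your proof is correct, and it follows essentially the same strategy as the paper's: both arguments hinge on the fact that the blowup ${\rm Bl}_0(Z)$ is nonsingular at a general point $[v]$ of the exceptional divisor $\BP TC_{Z,0}$ (exactly the step you flagged as delicate), and both then transport the Legendrian condition from the smooth locus of $Z$ to the tangent cone through this smooth point of the blowup.

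The execution differs in flavor. The paper picks an arc $\{x_t\}$ in $Z$ through $0$ with tangent direction $u$ at $t=0$, together with a family of tangent vectors $v_t \in T_{Z,x_t}$ limiting to a prescribed $v \in T_{TC_{Z,0},u}$; since $\sigma_{x_t}(\dot x_t, v_t)=0$ for $t\neq 0$ by the Legendrian condition, continuity gives $\omega(u,v)=0$. This is precisely your identity $\omega(V,\partial_{y_j}V)=0$, obtained instead by reading off the $\sigma^2$-coefficient of $\pi^*\theta(\partial_{y_j})$. The paper then stops here and invokes Proposition~\ref{p.projLegendre}: the single condition $\omega(u,v)=0$ for all $v\in T_{TC_{Z,0},u}$ is exactly the statement that $\BP TC_{Z,0}$ is Legendrian in $(\BP D_0, D^\omega)$, which is equivalent to $TC_{Z,0}$ being Lagrangian. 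You instead go one step further and extract $\omega(\partial_{y_j}V,\partial_{y_k}V)=0$ from $\pi^*{\rm d}\theta$, thereby verifying isotropy of $T_v TC_{Z,0}$ directly without passing through the Legendrian/Lagrangian equivalence. Your route is slightly more self-contained at the cost of one extra coefficient computation; the paper's is shorter but leans on Proposition~\ref{p.projLegendre} at that point. Either way, the underlying geometry is the same.
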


\begin{proof}
Let us regard all tangent spaces of $Z \subset \C^{2m+1}$ as affine subspaces in $\C^{2m+1}$.

Since $TC_{Z,0}$ is reduced, it is contained in $D_0$ from Proposition \ref{p.pTanCone}.
 To show that it is a Lagrangian cone, denote by $\sigma_x:= {\rm d} \theta|_{D_x}$  the symplectic form on $D_x \subset T_{\C^{2m+1},x}$ for each $x \in \C^{2m+1}$.
 To show that $TC_{Z,0}$ is a Lagrangian cone, it suffices by Proposition \ref{p.projLegendre} to show that $\sigma_0(u,v) =0$  for a general point $u \in TC_{Z,0}$ and any vector $$v \in T_{TC_{Z,0},u}
 \subset D_0, \ v \not\in \C u.$$
 Note that $\BP (\C u + \C v)$ is a tangent line to
  $\BP TC_{Z,0} \subset \BP D_0$ at the nonsingular point $[u] \in \BP TC_{Z,0}$.
  Let $\beta: {\rm Bl}_0(Z) \to Z$  be the blowup of $Z$ at $0$. Identify $\BP TC_{Z,0}$ with the exceptional divisor   of $\beta$ (see e.g. \cite{Mu} Ch.3, Sec.3). The assumption that
  $\BP TC_{Z,0}$ is reduced implies that ${\rm Bl}_0(Z)$ is  nonsingular at the  point $[u]
\in \BP TC_{Z,0}$.
Thus we can find an arc $\{ x_t \in Z,  t \in \Delta \}$ for a neighborhood $\Delta \subset \C$ of $0 \in \C$ such that \begin{itemize}
\item[(1)] $x_0 = 0 \in \C^{2m+1}$; \item[(2)] $x_t$ is a nonsingular point of  $Z$ if $ t \neq 0$;  and \item[(3)] the derivative
$\frac{\partial}{\partial t}|_{t=0} x_t \in \C u $. \end{itemize}
The vector $v$ gives a tangent vector $\vec{v}_0 \in T_{ \BP TC_{Z,0}, [u]} \subset T_{{\rm Bl}_0(Z), [u]}$.
Since both $\BP TC_{Z,0}$ and $ {\rm Bl}_0(Z)$ are nonsingular at $[u]$,
we can find a holomorphic family of tangent vectors $$\{\vec{v}_t \in T_{{\rm Bl}_0(Z), x_t}, t \in \Delta \setminus \{0\} \}$$ converging to $\vec{v}_0$. Let $$v_t = {\rm d} \beta(\vec{v}_t) \in T_{Z, x_t}, \ t\neq 0,$$ be the corresponding tangent vector to $Z$.  When $s \neq 0$, the plane $$\langle \frac{\partial}{\partial t}|_{t=s} x_t, v_s \rangle \ \subset \ D_{x_s}$$
is tangent to the smooth locus of $Z \subset \C^{2m+1}$. Thus $\sigma_{x_s}( \frac{\partial}{\partial t}|_{t=s} x_t, v_s) = 0$ for all $s \neq 0$ because $Z$ is Legendrian. Then by continuity, we obtain  $\sigma_0 (u, v) =0$.
\end{proof}

\begin{proof}[Proof of Theorem \ref{t.cone}]
It is immediate that the tangent cone of a Lagrangian cone $\sY \subset \C^{2m}$ at $0$ (in Definition \ref{d.Lagrange}) is isomorphic to itself. In particular, the tangent cone of a Lagrangian cone at $0$ is reduced. So one direction of Theorem \ref{t.cone} is trivial.

To prove the other direction, let us use the notation of Proposition \ref{p.TanCone}.
We are to show that the germ $0 \in Z \subset \C^{2m+1}$ is a Lagrangian cone singularity in the sense of Definition \ref{d.LagCone}, assuming that its tangent cone $TC_{Z,0}$ is reduced.

To start with,  we can assume that the tangent cone $ TC_{Z,0}$ spans $D_0$.
 For otherwise, the Zariski tangent space $T_{Z,0},$ which is the linear span of the tangent cone, does not contain $D_0$.  Then we can choose a nonsingular hypersurface $F$ containing $Z$
such that $T_{F,0} \neq D_0$ and apply Proposition \ref{p.product} to obtain a
a submersion $\psi: F \to M'$ to a manifold  of dimension $2m-1$ with a contact structure $D'$
    such that $\psi(Z)$ is a Legendrian subvariety of $(M', D')$ and $Z = \psi^{-1}(\psi(Z))$.
This implies that the tangent cone of $\psi(Z)$ at $\psi(0)$ is reduced. Thus by induction, we can assume that $\psi(0) \in \psi(Z)$ is  a Lagrangian cone singularity. Thus by Lemma \ref{l.conesing}, the Legendrian singularity $0 \in Z$ is a Lagrangian cone singularity. Thus from now, we assume that  $TC_{Z,0}$ spans $D_0$.

By Proposition \ref{p.TanCone}, the germ of $0 \in Z$ is contained in a germ of a nonsingular hypersurface $\Gamma$ with $T_{\Gamma,0} = D_0$.
We can view $\Gamma$ as the graph of
   an element of $\sO_{\C^{2m},0}$, where $\C^{2m} = (x_{2m+1}=0).$ In other words, in the notation of Theorem \ref{t.local},   there exists a homogeneous quadratic polynomial $q(x_1, \ldots, x_{2m})$ and a holomorphic function $h(x_1, \ldots, x_{2m}) \in {\bf m}_{0}^3$ such that
  $$x_{2m+1} = q(x_1, \ldots, x_{2m}) + h(x_1, \ldots, x_{2m})$$ is the defining equation of $\Gamma$. Set $$f := q(x_1, \ldots, x_{2m}) + h (x_1, \ldots, x_{2m}) - x_{2m+1}\  \in \ \sO_{\C^{2m+1}, 0}.$$ In terms of the coordinates $(z_1, \ldots, z_{2m})$ on $\Gamma$ defined by $z_i = x_{i}|_{\Gamma}, 1 \leq i \leq 2m,$
the same computation as in the proof of Theorem \ref{t.local} gives
\begin{eqnarray*}
\ \ 2 v^f|_{\Gamma} & =& \sum_{k=1}^m \left( (\frac{\partial h}{\partial z_{m+k}} + z_k) \frac{\partial }{\partial z_k} + (-\frac{\partial h}{\partial z_k} + z_{m+k}) \frac{\partial}{\partial z_{m+k}} \right)\\
& & + \sum_{k=1}^m \left( \frac{\partial q}{\partial z_{m+k}}  \frac{\partial }{\partial z_k} -\frac{\partial q}{\partial z_k} \frac{\partial}{\partial z_{m+k}} \right).    \end{eqnarray*}
Let ${\rm Bl}_0(\Gamma)$ be the blowup of $\Gamma$ at $0$.
Since $v^f|_{\Gamma}$ vanishes at $0$, it induces a vector field $\widetilde{v}$  on the blowup
${\rm Bl}_0(\Gamma)$. The first line in the expression of $2v^f|_{\Gamma}$ induces a vector field on ${\rm Bl}_0(\Gamma)$ that vanishes on the exceptional divisor. In fact, the vector field on the blowup induced by $$\sum_{k=1}^m \left( \frac{\partial h}{\partial z_{m+k}} \frac{\partial }{\partial z_k} - \frac{\partial h}{\partial z_k} \frac{\partial}{\partial z_{m+k}} \right)$$ vanishes on the exceptional divisor because
$h \in {\bf m}_0^3$, while the one induced by the radial vector field
$$\sum_{k=1}^m \left(  z_k \frac{\partial }{\partial z_k} + z_{m+k} \frac{\partial}{\partial z_{m+k}} \right)$$ obviously vanishes on the exceptional divisor.  The restriction of $\widetilde{v}$ to the exceptional divisor $\BP T_{\Gamma,0} \subset {\rm Bl}_0 (\Gamma)$ comes thus from the second line in the expression of $2 v^f|_{\Gamma}$, which is precisely
$\omega^{\dagger}(q)$ in the notation of Proposition \ref{p.QL}. Since $v^f$ is tangent to $Z$ by Theorem \ref{t.Kobayashi}, we know that $\widetilde{v}$ is tangent to ${\rm Bl}_0(Z) \subset {\rm Bl}_0(\Gamma)$ and also tangent to $\BP TC_{Z,0} \subset \BP T_{\Gamma,0} = \BP D_0$.
 Thus by Proposition \ref{p.QL} and Proposition \ref{p.TanCone},  the quadratic polynomial $q$ must vanish on the Legendrian subvariety $\BP TC_{Z,0}$ of $\BP D_0$.

 Let ${\bf m}_{\Gamma,0}$ be the maximal ideal of the local ring $\sO_{\Gamma,0}$ and let
 $J \subset \sO_{\Gamma,0}$ be the ideal of $Z$ inside $\Gamma$. Recall that the projective tangent cone $\BP TC_{Z,0} \subset \BP T_{\Gamma,0}$ is defined by the homogeneous ideal $J^* \subset {\rm Sym}^{\bullet} T_{\Gamma,0}^*$ generated by lowest order terms of elements of $J$.
 Since the projective scheme
$\BP TC_{Z,0} \subset \BP D_0$ is reduced, the homogeneous ideal $J^*$ defining the scheme $\BP TC_{Z,0}$ is a radical ideal, which implies that $q \in J^*.$
Since we have assumed that $ T_{Z,0}$ spans $D_0=  T_{\Gamma,0},$  the homogeneous ideal $J^*$ contains no elements of degree $1$.
Thus $q$ is an element of minimal degree in $J^*$. Consequently, it is the leading term of some element of $J$, i.e., there exists some $$c(z_1, \ldots, z_{2m}) \in {\bf m}^3_{\Gamma,0}$$ such that
$$q(z_1, \ldots, z_{2m}) - c(z_1, \ldots, z_{2m}) \ \in \ J.$$ Thus the hypersurface in $\C^{2m+1}$ defined by
$$x_{2m+1} = q+ h - (q - c) = c (x_1, \ldots, x_{2m}) + h(x_1, \ldots, x_{2m})
\in {\bf m}_0^3$$ contains $Z$. By Theorem \ref{t.local}, this hypersurface is contactomorphic to the hyperplane $x_{2m+1} =0$. Thus the germ at $0$ of $Z$ is contactomorphic to that
of a Lagrangian cone by Lemma \ref{l.Legendre}.
 \end{proof}

\section{Deformation-rigidity of normal Legendrian singularities}\label{s.normal}

In this section, we prove Theorem \ref{t.normal}.
Throughout, we fix a linear coordinate $t$ on $\C$ and denote a point of $\C$ simply by its coordinate $t \in \C$. Sometimes, we use another linear coordinate $\tau$ on $\C$ to distinguish it from $t$.
We begin with recalling a few standard facts on time-dependent vector fields. As some algebraic geometers may not be familiar with them, we will provide most of the proofs.

\begin{definition}\label{d.Kodaira} {\rm
Let $\Delta \subset \C$ be a neighborhood of $0 \in \C$. Let $M$ be a complex manifold.
Denote by $$\pi^M: M \times \Delta \to M \mbox{ and } \pi^{\Delta}: M \times \Delta \to \Delta$$ the natural projections.
\begin{itemize} \item[(i)]
When $W$ is a complex manifold and $\Psi: W \times \Delta \to M \times \Delta$ is a holomorphic map,
define for each $t \in \Delta$ and $w \in W$,  \begin{eqnarray*}
\Psi_t(w) & :=  & \pi^M \circ \Psi(w, t) \ \in M \\
\dot{\Psi}_t(w) & := & {\rm d} \pi^M \circ {\rm d} \Psi (\frac{\partial}{\partial t}|_{(w,t)}) \ \in T_{M, \Psi_t(w)}.
\end{eqnarray*} They define a holomorphic map $\Psi_t: W \to M$ and a holomorphic section
$\dot{\Psi}_t \in H^0(W, \Psi_t^* T_M).$
\item[(ii)] Let $\vec{B}$ be a holomorphic vector field on $M \times \Delta$
such that ${\rm d} \pi^{\Delta} (\vec{B}) =0$. For each $t \in \Delta$, denote by $\vec{B}_t \in H^0(M, T_M)$  the vector field
${\rm d} \pi^M \circ \vec{B}|_{M \times \{t\}}.$  \end{itemize}} \end{definition}

The following lemma on time-independent vector fields is a straightforward holomorphic translation of the standard result on differentiable manifolds (see e.g. Theorem 8.1 of \cite{St}).

\begin{lemma}\label{l.time-indep}
Let $X$ be a complex manifold and let $\vec{A} \in H^0(X, T_X)$ be a holomorphic vector field.
Then for each $x \in X$, there exist neighborhoods $O^x \subset X$ of $x$ and $\Delta^x \subset \C$ of $0$ with  a holomorphic map
$$\Phi^{\vec{A}}:  O^x \times  \Delta^x \to X \times \Delta^x$$ which is
biholomorphic over its image such that
$$\pi^{\Delta^x} \circ \Phi^{\vec{A}} = \pi^{\Delta^x}|_{O^x \times \Delta^x}, \
\Phi^{\vec{A}}|_{O^x \times \{0\}} = {\rm Id}_{O^x \times \{0\}} \mbox{ and }
{\rm d} \Phi^{\vec{A}} (\frac{\partial}{\partial \tau}) = \overrightarrow{A}
+\frac{\partial}{\partial \tau},$$
 where  $\overrightarrow{A}$ denotes the vector field on $X \times \Delta^x$ that is sent to $\vec{A}$ by the projection $\pi^{X}$ and  $\tau$ is the restriction of the coordinate $t$ to $\Delta^x \subset \C$. Moreover, the germ of $\Phi^{\vec{A}}$ at $(x,0)$ is uniquely determined by the above properties.
\end{lemma}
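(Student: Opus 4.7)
The plan is to construct $\Phi^{\vec{A}}$ as the flow of $\vec{A}$ obtained by solving a holomorphic ODE with holomorphic dependence on the initial condition. First, I would choose holomorphic coordinates $(z_1,\ldots,z_n)$ on an open neighborhood of $x$ in $X$ in which $\vec{A}$ takes the form $\sum_{i=1}^n a_i(z)\,\partial/\partial z_i$ for holomorphic $a_i$. Treating the initial position $z_0$ as a holomorphic parameter, the desired flow is the unique solution $\gamma(z_0,\tau)$ of the Cauchy problem
$$\frac{\partial \gamma}{\partial \tau}(z_0,\tau) \;=\; \vec{A}(\gamma(z_0,\tau)), \qquad \gamma(z_0,0)\;=\;z_0.$$

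Next, I would invoke the holomorphic version of the Picard--Lindel\"of existence and uniqueness theorem with holomorphic dependence on parameters. This can be established either directly, by running Picard iteration in the holomorphic category (the iterates converge uniformly on a product of polydiscs around $(x,0)$ to a holomorphic limit), or by complexifying the classical real-analytic result. The output is a holomorphic map $\gamma\colon O^x\times\Delta^x \to X$ solving the Cauchy problem with $\gamma(\cdot,0)=\mbox{Id}_{O^x}$. Setting $\Phi^{\vec{A}}(y,\tau):=(\gamma(y,\tau),\tau)$ makes the three listed identities immediate: $\pi^{\Delta^x}\circ\Phi^{\vec{A}}=\pi^{\Delta^x}$ holds by definition; $\Phi^{\vec{A}}|_{O^x\times\{0\}}=\mbox{Id}$ is the initial condition; and ${\rm d}\Phi^{\vec{A}}(\partial/\partial\tau)=\overrightarrow{A}+\partial/\partial\tau$ reads off the ODE, since the $X$-component of ${\rm d}\Phi^{\vec{A}}(\partial/\partial\tau)$ is $\partial\gamma/\partial\tau$.

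Biholomorphicity onto the image follows because the differential of $\Phi^{\vec{A}}$ at $(x,0)$ equals the identity on $T_{X,x}\oplus T_{\C,0}$, so after shrinking $O^x$ and $\Delta^x$ the inverse function theorem provides a holomorphic inverse. Uniqueness of the germ at $(x,0)$ reduces slicewise to uniqueness in the holomorphic ODE theorem: any candidate satisfying the three properties restricts, for each fixed $z_0$, to an integral curve of $\vec{A}$ through $z_0$ with the prescribed initial velocity, hence must coincide with $\gamma(z_0,\cdot)$. The only potentially delicate ingredient is the holomorphic dependence of the flow on the initial point, but this is a classical fact; I do not anticipate any real obstacle, the lemma being essentially a packaging of the holomorphic flow construction together with the inverse function theorem.
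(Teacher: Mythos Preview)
Your proposal is correct and matches the paper's approach: the paper does not actually give a proof, remarking only that the lemma is ``a straightforward holomorphic translation of the standard result on differentiable manifolds'' and citing Theorem~8.1 of \cite{St}. Your argument via the holomorphic Picard--Lindel\"of theorem (with holomorphic dependence on initial data) together with the inverse function theorem is precisely this standard construction spelled out.
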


\begin{definition}\label{d.preserve} {\rm
Let $M$ be a complex manifold and let $\sD \subset T_M$ a subbundle.
We say that a vector field $\vec{A} \in H^0(M, T_M)$ {\em preserves} $\sD$ if
for each $x \in M$ and $\Phi^{\vec{A}}$ as in Lemma \ref{l.time-indep},
$${\rm d} \Phi_{\tau} (\sD) \subset \sD \mbox{ for each } \tau \in \Delta^x.$$
If $D$ is a contact structure on $M$, a vector field $\vec{A}$ on $M$ which preserves $D$ is
called an {\em infinitesimal contactomorphism}. The Lie algebra of all infinitesimal contactomorphisms of $(M,D)$ is denoted by $\cont(M,D)$.}
\end{definition}

We have the following  time-dependent contact version of Lemma \ref{l.time-indep}.

\begin{lemma}\label{l.time-dep}
Let $(M, D)$ be a contact manifold  and let $\pi^{\Delta}: M \times \Delta \to \Delta$ be the projection. Let
$\vec{B}$  be a holomorphic vector field on $M \times \Delta$ such that ${\rm d} \pi^{\Delta} (\vec{B}) =0$ and  $\vec{B}_t \in \cont(M,B) \subset H^0(M, T_M)$ in the terminology of Definition \ref{d.Kodaira} and Definition \ref{d.preserve}.
For each $z \in M$, there exist neighborhoods $U^z \subset M$ of $z$ and $\Delta^z \subset \Delta$ of $0$ with a holomorphic map  $$\Psi^{\vec{B}}: U^z \times \Delta^z  \to  M \times \Delta^z$$ which is biholomorphic over its image such that
\begin{itemize} \item[\rm (a)] $ \pi^{\Delta} \circ \Psi^{\vec{B}} = \pi^{\Delta}|_{U^z \times \Delta^z} $;
\item[\rm (b)] $\Psi^{\vec{B}}|_{U^z \times \{ 0\} } = {\rm Id}_{ U^z \times \{0\}}$;
 and
\item[\rm (c)] ${\rm d} \Psi^{\vec{B}} (\frac{\partial}{\partial t})  = \vec{B} + \frac{\partial}{\partial t}$ at points where both sides make sense; and
    \item[\rm (d)] the map $\Psi^{\vec{B}}_t: U^z \to M$ is contactomorphic over its image for each $t \in \Delta^z$.  \end{itemize}
    Moreover, the germ of $\Psi^{\vec{B}}$ at $(z, 0)$ is uniquely determined by the properties {\rm (a)}, {\rm (b)} and {\rm (c)}.
    \end{lemma}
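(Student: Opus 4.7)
The plan is to reduce the time-dependent statement to the time-independent Lemma \ref{l.time-indep} by the classical trick of suspending the vector field on the product $M \times \Delta$. Concretely, consider the holomorphic vector field $\vec{C} := \vec{B} + \frac{\partial}{\partial t}$ on the complex manifold $M \times \Delta$, where $\frac{\partial}{\partial t}$ denotes the canonical lift of the coordinate vector field on $\Delta$. Since $\vec{C}$ is time-independent, Lemma \ref{l.time-indep} applied to $\vec{C}$ at $(z, 0) \in M \times \Delta$ yields, after shrinking, a local flow
$$\Phi^{\vec{C}}: O \times \Delta^{z} \longrightarrow (M \times \Delta) \times \Delta^{z}$$
with auxiliary parameter $\tau$. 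Because ${\rm d}\pi^{\Delta}(\vec{C}) = \frac{\partial}{\partial t}$, the trajectory of $\vec{C}$ starting from $(x, 0) \in M \times \{0\}$ lies in $M \times \{\tau\}$ at flow time $\tau$, so the two $\Delta$-coordinates can be identified. I would then define, for a suitable neighborhood $U^{z} \subset M$ of $z$,
$$\Psi^{\vec{B}}(x, t) \; := \; \Phi^{\vec{C}}_{t}(x, 0), \qquad (x, t) \in U^{z} \times \Delta^{z}.$$
This map is biholomorphic onto its image and satisfies (a), (b), (c) directly from the defining properties of $\Phi^{\vec{C}}$, while the uniqueness clause of Lemma \ref{l.time-indep} transfers to uniqueness of the germ of $\Psi^{\vec{B}}$ characterized by (a), (b), (c).

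The one genuinely contact-geometric step is (d). Working in a Darboux neighborhood of $z$, let $\theta$ be a local holomorphic $1$-form defining $D$. The hypothesis $\vec{B}_{t} \in \cont(M, D)$ means that, for each fixed $t$, the flow of $\vec{B}_{t}$ preserves $\ker \theta$; differentiating at the origin of its flow parameter yields the equivalent infinitesimal identity: there is a holomorphic function $\lambda_{t}$ on a neighborhood of $z$ with
$${\rm Lie}_{\vec{B}_{t}} \theta \; = \; \lambda_{t} \cdot \theta.$$
The standard formula for the $t$-derivative of a pullback by a time-dependent family then gives
$$\frac{{\rm d}}{{\rm d} t} \bigl( \Psi_{t}^{*} \theta \bigr) \; = \; \Psi_{t}^{*} {\rm Lie}_{\vec{B}_{t}} \theta \; = \; (\lambda_{t} \circ \Psi_{t}) \cdot \Psi_{t}^{*} \theta,$$
which at each point $x \in U^{z}$ is a scalar linear ODE with initial value $\Psi_{0}^{*} \theta = \theta$. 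Its solution is the nowhere-vanishing multiple $\exp\!\bigl( \int_{0}^{t} \lambda_{s}(\Psi_{s}(x)) \, {\rm d} s \bigr) \cdot \theta_{x}$, so $\Psi_{t}^{*} \theta$ is proportional to $\theta$. Hence ${\rm d}\Psi_{t}(\ker \theta) = \ker \theta$, which is exactly (d).

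The construction of $\Psi^{\vec{B}}$ via suspension is standard and causes no real trouble; the actual obstacle is passing from the family of pointwise conditions ``each $\vec{B}_{t}$ preserves $D$'' to the statement ``the time-dependent flow $\Psi_{t}$ preserves $D$''. The pointwise linear ODE above is what bridges the two, using crucially that the infinitesimal contactomorphism condition is the algebraic identity ${\rm Lie}_{\vec{A}} \theta \wedge \theta = 0$, which behaves compatibly with $t$-dependence.
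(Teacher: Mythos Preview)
Your proof is correct. The construction of $\Psi^{\vec{B}}$ by suspending $\vec{B}$ to $\vec{C}=\vec{B}+\frac{\partial}{\partial t}$ and flowing from the slice $M\times\{0\}$ is exactly what the paper does, though the paper packages it as $\pi^{X}\circ\Phi^{\vec{A}}\circ(\Phi^{\partial/\partial t})^{-1}\circ\gamma$ on $X=M\times\Delta$, which unwinds to the same formula $\Psi^{\vec{B}}(x,t)=\Phi^{\vec{C}}_{t}(x,0)$. The only genuine difference is in the verification of (d): the paper lifts $D$ to a corank-one subbundle $\sD\subset T_{M\times\Delta}$, observes that $\vec{B}$, $\frac{\partial}{\partial t}$, and hence their sum all preserve $\sD$ (an implicit appeal to the linearity of the Lie-bracket condition $[\,\cdot\,,\Gamma(\sD)]\subset\Gamma(\sD)$), and then reads off (d) from the fact that the flow of a vector field preserving $\sD$ preserves $\sD$. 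Your argument instead works directly with a local contact form and the time-dependent Lie-derivative identity $\frac{d}{dt}(\Psi_{t}^{*}\theta)=\Psi_{t}^{*}\mathrm{Lie}_{\vec{B}_{t}}\theta$, reducing (d) to a scalar linear ODE. Both arguments are standard and rest on the same linearity; yours is a bit more explicit and has the minor bonus of exhibiting the conformal factor, while the paper's avoids choosing a local $\theta$.
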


\begin{proof} Equip $X :=  M \times \Delta$ (resp. $ X \times \Delta$) with the subbundle $\sD \subset T_X$ (resp. $\widetilde{\sD} \subset T_{X \times \Delta}$)  given
by $$\sD_{(z,t)} = ({\rm d} \pi^M|_{(z,t)})^{-1} (D_z) \ \left(\mbox{ resp. } \widetilde{\sD}_{(x, \tau)} = ({\rm d} \pi^X|_{(x, \tau)})^{-1} (\sD_x) \right),$$
 where $\pi^M: M \times \Delta \to M$ and $\pi^X: X\times \Delta \to X$ are the natural projections. Then the vector field $\vec{B}$ preserves $\sD$ and so do $\frac{\partial}{\partial t}$ and  $\vec{A}:= \vec{B} + \frac{\partial}{\partial t}$.   Applying Lemma \ref{l.time-indep} with $x= (z,0)$, we obtain two holomorphic maps $$\Phi^{\frac{\partial}{\partial t}},
\Phi^{\vec{A}}: \ O^x \times \Delta^x \to X \times \Delta^x $$ satisfying the properties in Lemma \ref{l.time-indep}, in particular,  \begin{equation}\label{e.push}
{\rm d} \Phi^{\frac{\partial}{\partial t}} ( \frac{\partial}{\partial \tau})  = \frac{\partial}{\partial t} + \frac{\partial}{\partial \tau} \  \mbox{ and } \
{\rm d} \Phi^{\vec{A}} (\frac{\partial}{\partial \tau}) = \vec{A} + \frac{\partial}{\partial \tau}. \end{equation}
Furthermore, \begin{equation}\label{e.D}
{\rm d} \Phi^{\frac{\partial}{\partial t}} ( \widetilde{\sD}) \subset \widetilde{\sD} \ \mbox{ and } \ {\rm d} \Phi^{\vec{A}} (\widetilde{\sD}) \subset \widetilde{\sD} \end{equation}
at points where they make sense.
 Let $$\gamma: M \times \Delta (= X ) \to M \times \Delta \times \Delta$$
be the diagonal embedding $\gamma (v, t) = (v, t, \tau=t)$. We have \begin{equation}\label{e.push2} {\rm d} \gamma (\frac{\partial}{\partial t})
= (\frac{\partial}{\partial t} + \frac{\partial}{\partial \tau}) |_{\gamma(X)}. \end{equation} and
\begin{equation}\label{e.D2} {\rm d} \gamma (\sD) \subset \widetilde{\sD}. \end{equation}
Set $$\Psi^{\vec{B}} := \pi^X \circ \Phi^{\vec{A}} \circ (\Phi^{\frac{\partial}{\partial t}})^{-1} \circ \gamma$$ which is defined on $U^z \times \Delta^z \subset M \times \Delta$
for a suitable choice of the neighborhoods $U^z$ and $\Delta^z$.  It is easy to see that the conditions (a) and (b) are satisfied. The condition (c) follows from (\ref{e.push}), (\ref{e.push2}) and
\begin{eqnarray*} {\rm d} \Psi^{\vec{B}} (\frac{\partial}{\partial t})
 &=& {\rm d} \pi^X \circ {\rm d} \Phi^{\vec{A}} \circ {\rm d} (\Phi^{\frac{\partial}{\partial t}})^{-1} (\frac{\partial}{\partial t} + \frac{\partial}{\partial \tau} ) \\
&=& {\rm d} \pi^X \circ {\rm d} \Phi^{\vec{A}} (\frac{\partial}{\partial \tau})  \\
&=& {\rm d} \pi^X (\overrightarrow{A} + \frac{\partial}{\partial \tau}) \\
&=& \vec{A} \ = \ \vec{B} + \frac{\partial}{\partial t}. \end{eqnarray*}
Finally, (d) is from ${\rm d} \Psi^{\vec{B}} ( \sD) \subset \sD$, which follows from (\ref{e.D}) and (\ref{e.D2}). The uniqueness of the germ of $\Psi^{\vec{B}}$ follows from the uniqueness theorem on solutions of ordinary differential equations. \end{proof}

The next lemma is a direct consequence of the uniqueness of the integral curve  of a vector field through a given point on a manifold, applied to the vector field $\vec{B} + \frac{\partial}{\partial t}.$

\begin{lemma}\label{l.map}
In Lemma \ref{l.time-dep}, assume that there exists a complex submanifold $W
 \subset M$  and a holomorphic map $F: W\times \Delta \to M \times \Delta,$ which is biholomorphic over its image, such that \begin{itemize} \item[\rm (i)] $\pi^{\Delta} \circ F = \pi^{\Delta}|_{F(W \times \Delta)} $;
 \item[\rm (ii)] $F|_{W \times \{0\} } = {\rm Id}_{W \times \{0\}}$; and \item[\rm (iii)]
${\rm d} F (\frac{\partial}{\partial t}) = \vec{B} + \frac{\partial}{\partial t}$ at points   where both sides make sense. \end{itemize}
For a point $z \in W \subset M,$ let $\Psi^{\vec{B}}: U^z \times \Delta^z \to M \times \Delta^z$ be as in Lemma \ref{l.time-dep}. Then, shrinking $U^z$ and $\Delta^z$ if necessary, we have
$$\Psi^{\vec{B}} ( (U^z \cap W)  \times \Delta^z) \subset F (W \times \Delta^z).$$
In other words, for any $y \in W$ in a neighborhood of $z$ and any $t \in \Delta$ in a neighborhood of $0$, we have
$\Psi^{\vec{B}}_t (y) = F_t(y)$. \end{lemma}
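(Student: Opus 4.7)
The plan is to recognize that both $F$ and $\Psi^{\vec{B}}$ are, pointwise in the source variable, integral curves of one and the same holomorphic vector field on $M\times\Delta$, starting at the same initial point. The lemma is then a direct application of uniqueness of solutions of holomorphic ODEs, as the hint preceding the statement already suggests.

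Concretely, I would introduce the vector field $\vec{X}:=\vec{B}+\partial/\partial t$ on $M\times\Delta$; this is a well-defined holomorphic vector field because ${\rm d}\pi^\Delta(\vec{B})=0$. For each fixed $y\in W$ near $z$, consider the two holomorphic curves in $M\times\Delta$
\[
\sigma_y(s):=F(y,s),\qquad \tau_y(s):=\Psi^{\vec{B}}(y,s).
\]
Hypotheses (ii) and (iii) on $F$ yield $\sigma_y(0)=(y,0)$ and $\dot\sigma_y(s)=\vec{X}|_{\sigma_y(s)}$, while conditions (b) and (c) of Lemma~\ref{l.time-dep} give the corresponding identities $\tau_y(0)=(y,0)$ and $\dot\tau_y(s)=\vec{X}|_{\tau_y(s)}$. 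By uniqueness of integral curves of a holomorphic vector field through a prescribed initial point, $\sigma_y=\tau_y$ on some neighborhood of $0$ in $\C$, which is exactly the asserted pointwise equality $F_t(y)=\Psi^{\vec{B}}_t(y)$.

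The only point demanding any care, and hence where I would focus attention, is making the neighborhood in $s$ uniform in $y$. For this I would re-apply Lemma~\ref{l.time-indep} to the vector field $\vec{X}$ on $M\times\Delta$ itself: that lemma furnishes a single biholomorphic flow of $\vec{X}$ on some neighborhood of $(z,0)\in M\times\Delta$, and any integral curve of $\vec{X}$ that starts inside this neighborhood must coincide with the flow as long as it stays there. Choosing $U^z\subset M$ and $\Delta^z\subset\Delta$ sufficiently small that both $F$ and $\Psi^{\vec{B}}$ keep $(U^z\cap W)\times\Delta^z$ inside this common neighborhood yields the inclusion $\Psi^{\vec{B}}((U^z\cap W)\times\Delta^z)\subset F(W\times\Delta^z)$ claimed in the statement, with no real obstacle along the way beyond this book-keeping of domains.
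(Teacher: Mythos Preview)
Your proposal is correct and matches the paper's approach exactly: the paper does not give a detailed proof but simply states that the lemma is a direct consequence of the uniqueness of the integral curve of a vector field through a given point, applied to $\vec{B}+\frac{\partial}{\partial t}$. Your write-up spells this out carefully, including the uniformity of the neighborhood, which the paper leaves implicit.
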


\begin{definition}\label{d.Kodaira2}
{\rm Let $M$ be a complex manifold and $\sS \subset M \times \Delta$ be a submanifold such that the restriction $\pi^{\Delta}|_{\sS}$ is submersive with connected fibers. Then we can view $\sS$ as a family of submanifolds $\{ S_t \subset M, t \in \Delta\}$ parametrized by $\Delta$. For each $ t \in \Delta$,  there exists a section $\dot{S}_t \in H^0(S_t, N_{S_t})$  of the normal bundle $N_{S_t}$ of $S_t \subset M$, called the {\em infinitesimal deformation} of the family  of submanifolds at $t$, which can be described in the following way for $t$ close to $0$ (this is a reformulation of the standard definition, e.g.,  pp. 148-150 of \cite{Kd}).

For a point $z \in S_0$, we can pick  neighborhoods $W_z \subset S_0$ of $z$ and
$\Delta_z \subset \Delta$ of $0$  with a holomorphic map
$G: W_z \times \Delta_z \to \sS \cap (M \times \Delta_z),$ which is biholomorphic over its image, such that
 \begin{itemize}
 \item[(1)] $G_t(w) \in S_{t}$, i.e., $G(w, t) \in \sS \cap (M \times \{ t \} )$
 for any $w \in W_z$ and $t \in \Delta_z$; and
  \item[(2)] $G|_{W_z \times \{0\}} = {\rm Id}_{W_z \times \{0\}}$, i.e., $G_0 = {\rm Id}_{W_z}$.
 \end{itemize}
  Then     for any $w \in W_z$ and $t \in \Delta_z,$  the value $\dot{S}_{t}$  of the section $\dot{S}_{t} \in H^0(S_{t}, N_{S_{t}})$ at the point $y = G_t(w) \in S_{t}$ is given by
   $$ \dot{S}_{t} (y) \ = \ {\rm d} \pi^M \circ \left( {\rm d} G(\frac{\partial}{\partial t}|_{(w,t)})  - \frac{\partial}{\partial t}|_{(y, t)} \right) \ \mbox{ modulo } \ T_{S_{t}, y}.  $$ This is independent of the choice of $G$. } \end{definition}

\begin{lemma}\label{l.submanifold}
In Definition \ref{d.Kodaira2}, assume that there exists  a vector field
 $\vec{B}$ on $M \times \Delta$ such that ${\rm d} \pi^{\Delta} (\vec{B}) =0$ and
$$\dot{S}_t (y) = \vec{B}_t(y) \mbox{ modulo } T_{S_t,y} \mbox{ for each  } t \in \Delta \mbox{ and
} y \in S_t.$$ For $z \in S_0 \subset M$, let $\Psi^{\vec{B}}: U^z \times \Delta^z \to M \times \Delta^z$  be the map defined in Lemma \ref{l.time-dep}.
Then $\Psi^{\vec{B}}_t(y) \in S_t$ for any $t \in \Delta^z$ close to $0$ and any $y \in S_0$ close to $z$.
 \end{lemma}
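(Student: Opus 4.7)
The plan is to reduce to Lemma \ref{l.map} by exhibiting a map $F:W\times\Delta^{z}\to M\times\Delta^{z}$ whose image lies in $\sS$ and whose $\partial/\partial t$-derivative equals $\vec{B}+\partial/\partial t$; the uniqueness clause of Lemma \ref{l.map} will then force $\Psi^{\vec{B}}_{t}(y)=F_{t}(y)\in S_{t}$. All the substance of the lemma is therefore concentrated in the claim that the vector field $\vec{A}:=\vec{B}+\partial/\partial t$ on $M\times\Delta$ is tangent to the submanifold $\sS$ at every point near $(z,0)$.

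To verify this tangency I would fix $(y,t)\in\sS$ with $t$ close to $0$, write $y=G_{t}(w)$ for the local trivialization $G:W_{z}\times\Delta_{z}\to\sS$ of Definition \ref{d.Kodaira2}, and compare two descriptions of $\dot{S}_{t}(y)\in N_{S_{t},y}$: the one built into the hypothesis, namely $\dot{S}_{t}(y)\equiv\vec{B}_{t}(y)\bmod T_{S_{t},y}$, and the one encoded by $G$, namely $\dot{S}_{t}(y)\equiv{\rm d}\pi^{M}\circ{\rm d} G(\partial/\partial t|_{(w,t)})\bmod T_{S_{t},y}$ (the $\partial/\partial t|_{(y,t)}$-correction in Definition \ref{d.Kodaira2} is killed by ${\rm d}\pi^{M}$). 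Subtracting, the vector
$${\rm d} G(\partial/\partial t|_{(w,t)})\;-\;\bigl(\vec{B}_{t}(y)+\partial/\partial t|_{(y,t)}\bigr)$$
has zero $\pi^{\Delta}$-component (both summands cover $\partial/\partial t$) and, under ${\rm d}\pi^{M}$, lies in $T_{S_{t},y}$. So the difference belongs to $T_{S_{t},y}\times\{0\}\subset T_{(y,t)}\sS$, and since ${\rm d} G(\partial/\partial t|_{(w,t)})$ is already in $T_{(y,t)}\sS$, so is $\vec{A}|_{(y,t)}$.

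With tangency in hand, $\vec{A}|_{\sS}$ is a holomorphic vector field on the complex manifold $\sS$ whose $\pi^{\Delta}$-projection is $\partial/\partial t$. Applying Lemma \ref{l.time-indep} to $\vec{A}|_{\sS}$ on $\sS$ at the point $(z,0)$ yields a local flow inside $\sS$ that advances the $\Delta$-coordinate by $\tau$ after time $\tau$; composing with the inclusion $\sS\hookrightarrow M\times\Delta$ and restricting the source to $W\times\Delta^{z}$, with $W\subset S_{0}$ a suitable neighborhood of $z$, produces a map $F:W\times\Delta^{z}\to M\times\Delta^{z}$ satisfying conditions (i)--(iii) of Lemma \ref{l.map} and having image in $\sS$. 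Lemma \ref{l.map} then gives $\Psi^{\vec{B}}_{t}(y)=F_{t}(y)\in S_{t}$, as required. The main obstacle is the tangency step, which is a careful unpacking of the definition of the infinitesimal deformation $\dot{S}_{t}$; once that is settled, everything else is formal ODE theory together with the uniqueness clause of Lemma \ref{l.map}.
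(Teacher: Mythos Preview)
Your argument is correct and is essentially the paper's proof repackaged: both construct an $F$ with image in $\sS$ satisfying the hypotheses of Lemma \ref{l.map}, and the key step in each is precisely your tangency claim that $\vec{A}=\vec{B}+\partial/\partial t$ is tangent to $\sS$. The only cosmetic difference is that the paper pulls this tangency back through the chart $G$ to a vertical vector field $\vec{E}$ on $W_z\times\Delta_z$ (characterized by ${\rm d}G(\vec{E}+\partial/\partial t)=\vec{A}$, whose existence is exactly your tangency computation) and integrates there via Lemma \ref{l.time-dep}, setting $F=G\circ\Psi^{\vec{E}}$, whereas you integrate $\vec{A}|_{\sS}$ directly on $\sS$ via Lemma \ref{l.time-indep}.
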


\begin{proof}
For a fixed $z \in S_0$,
let $G: W_z \times \Delta_z \to \sS$ be as in  Definition \ref{d.Kodaira2}.
  By the assumption on $\dot{S}_t$, we can find a  holomorphic vector field $\vec{E}$ on $W_z \times \Delta_z$ satisfying ${\rm d} \pi^{\Delta} (\vec{E}) =0$ and  $$ {\rm d} G (\vec{E}) =   \vec{B} - \left( {\rm d} G (\frac{\partial}{\partial t}) - \frac{\partial}{\partial t}\right),$$ in other words,
 \begin{equation}\label{e.G} {\rm d} G (\vec{E} + \frac{\partial}{\partial t}) = \vec{B} + \frac{\partial}{\partial t}, \end{equation}
 on the image of $G$.

 Let us apply Lemma \ref{l.time-dep} to the vector field $\vec{E}$ on $W_z \times \Delta_z$.
  We have neighborhoods $W \subset W_z$ of $z \in W_z$ and $\Delta^z \subset \Delta_z$ of $0 \in \Delta_z$ with a holomorphic map $\Psi^{\vec{E}}: W \times \Delta^z  \to W_z \times \Delta^z$ such that \begin{equation}\label{e.P} {\rm d} \Psi^{\vec{E}} (\frac{\partial}{\partial t})
= \vec{E} + \frac{\partial}{\partial t}. \end{equation}
Set $F:= G \circ \Psi^{\vec{E}} : W \times \Delta^z \to M \times \Delta^z$.
We claim that $F$ satisfies the conditions of Lemma \ref{l.map}.
The conditions (i) and (ii) of Lemma \ref{l.map} are immediate  from the properties of (1) and (2) of $G$ in Definition \ref{d.Kodaira2}, while
   (iii)  follows from
\begin{eqnarray*} {\rm d} F (\frac{\partial}{\partial t}) &=& {\rm d} G \circ {\rm d} \Psi^{\vec{E}} (\frac{\partial}{\partial t}) \\ &=& {\rm d} G ( \vec{E} + \frac{\partial}{\partial t}) \ \mbox{ by (\ref{e.P}) }\\
 &=& \vec{B} + \frac{\partial}{\partial t} \ \mbox{ by (\ref{e.G}). } \end{eqnarray*}
 By the claim, we can apply  Lemma \ref{l.map}
 to see $\Psi^{\vec{B}}_t(y) = F_t(y) \in S_t$ for any $t \in \Delta^z$ close to $0$ and any
 $y \in S_0$ close to $z \in S_0$.
    \end{proof}

To prove Theorem \ref{t.normal}, we recall some basic facts from contact geometry.
The following lemma is from Example 1.2.C in Chapter 4 of \cite{AG}.

\begin{lemma}\label{l.jet}
Let $S$ be a complex manifold and let $\sL$ be a line bundle on $S$. Then the underlying complex manifold of the 1-jet bundle $J^1_S \sL$ satisfying the exact sequence
$$ 0 \to T^*_S \otimes \sL \to J^1_S \sL \stackrel{j}{\to} \sL \to 0$$
has a natural contact structure. Moreover, a section $\Sigma \subset J^1_S \sL$ is a Legendrian submanifold if and only if it is the 1-jet
of the section $j(\Sigma) \in H^0(S, \sL)$. \end{lemma}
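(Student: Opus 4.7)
The plan is to construct the contact structure on $J^1_S\sL$ from a tautological $\pi^*\sL$-valued one-form $\theta$ and verify both assertions by direct computation in local trivializations of $\sL$, where $\pi: J^1_S\sL \to S$ denotes the projection.

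First, in a local trivialization $\sL|_U \cong \sO_U$ with base coordinates $(x_1,\dots,x_n)$ on $U$, the bundle $J^1_S\sL|_U$ becomes $T^*_U \oplus \sO_U$, producing fiber coordinates $(u, p_1,\dots,p_n)$ with the convention that the $1$-jet of $f \in \sO_U$ is $(x, f(x), \partial_1 f(x), \dots, \partial_n f(x))$. Set $\theta := du - \sum_i p_i\, dx_i$. I would check that under a gauge change $\tilde u = g u$ with $g$ nowhere vanishing, the derivative coordinates transform as $\tilde p_i = g p_i + (\partial_i g)\, u$, and a short computation yields $\tilde\theta = g\,\theta$. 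Hence $\theta$ is globally well-defined as a section of $\Omega^1_{J^1_S\sL} \otimes \pi^*\sL$, and $D := \ker\theta$ is a rank-$2n$ subbundle of $T_{J^1_S\sL}$ independent of the trivialization. The quotient $T_{J^1_S\sL}/D$ is canonically identified with $\pi^*\sL$ via $\theta$, so $d\theta|_D$ is a well-defined $\pi^*\sL$-valued $2$-form; locally it reads $-\sum dp_i \wedge dx_i$, which is the standard symplectic form on the $2n$ remaining coordinate directions, hence non-degenerate. This exhibits $D$ as a contact structure on the $(2n+1)$-dimensional manifold $J^1_S\sL$.

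For the Legendrian characterization, write a local section $\Sigma \subset J^1_S\sL$ as $(x, u(x), p(x))$. Its tangent vectors at a point are of the form $\partial_{x_i} + (\partial_i u)\partial_u + \sum_j (\partial_i p_j)\partial_{p_j}$, and evaluating $\theta$ on each produces $\partial_i u - p_i$. The condition $\theta|_\Sigma = 0$ is therefore equivalent to $p_i = \partial_i u$ for every $i$, that is, $\Sigma = j^1 u$; intrinsically, since $u$ corresponds to $j(\Sigma) \in H^0(S,\sL)$ in the trivialization, this says precisely that $\Sigma$ is the $1$-jet of $j(\Sigma)$. The same computation read in the opposite direction shows that $j^1 s$ is Legendrian for every $s \in H^0(S,\sL)$.

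The argument is essentially a bookkeeping exercise in local coordinates, so no substantial obstacle is expected; the only point requiring a small amount of care is the gauge-transformation law $\tilde\theta = g\theta$, which is exactly what makes $D$ and the Legendrian characterization independent of the chosen trivialization of $\sL$.
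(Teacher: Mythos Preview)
Your argument is correct and is the standard construction of the contact structure on the jet bundle via the tautological $\pi^*\sL$-valued one-form; the gauge-transformation check $\tilde\theta = g\theta$ is exactly the point that makes the kernel $D$ and the Legendrian condition intrinsic, and your verification of it is accurate. The paper does not supply its own proof of this lemma but simply refers the reader to Example~1.2.C in Chapter~4 of \cite{AG}, where precisely this local-coordinate description appears; so your write-up is not a different approach but rather an explicit spelling-out of the cited argument.
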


The next lemma is classical. See Theorem 7.1 in \cite{Kb} for (i) and  Lemma 7.1 in \cite{LB} for (ii).

  \begin{lemma}\label{l.contact1}
   For a contact manifold $(M, D)$, let $L = T_M/D$ be the quotient line bundle and $\vartheta: T_M \to L$ be the quotient homomorphism.

  \begin{itemize}
  \item[\rm (i)]  The homomorphism $$\cont(M,D) \subset H^0(M, T_M) \to H^0(M, L)$$ induced by  $\vartheta: T_M \to L$ gives an isomorphism of vector spaces $ \eta: \cont(M,D) \cong H^0(M,L).$
      \item[\rm (ii)] For a Legendrian submanifold  $S \subset M$, let  $\sL:= L|_{S}$ and let
       $j: J^1_S \sL \to \sL$ be as in Lemma \ref{l.jet}. Then there is a natural isomorphism $\zeta: N_{S} \cong J^1_{S} \sL$ between the  normal bundle $N_{S}$ of $S \subset M$  and the 1-jet bundle $J^1_{S} \sL$  such that $$ j \circ \zeta = \vartheta|_S : N_S \to \sL.$$ In particular,  the underlying manifold of $N_{S}$ has a natural contact structure via Lemma \ref{l.jet}.\end{itemize} \end{lemma}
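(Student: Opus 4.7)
The proof splits into two largely independent pieces. For (i), the plan is to construct a local inverse to $\eta$ using Theorem~\ref{t.Kobayashi} and then glue via uniqueness. For (ii), the plan is to produce the correct short exact sequence for $N_S$ coming from the contact data, match its pieces with those of the defining extension of $J^1_S \sL$, and then build the isomorphism $\zeta$ itself.

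For (i), I would first verify injectivity. Suppose $v \in \cont(M,D)$ satisfies $\vartheta(v) = 0$, i.e.\ $v$ is a section of $D$. Choosing a local defining 1-form $\theta$ for $D$, the infinitesimal-contact condition reads ${\rm Lie}_v \theta = \lambda \theta$ for some local function $\lambda$. Cartan's formula gives ${\rm Lie}_v \theta = \iota_v \, {\rm d}\theta + {\rm d}(\theta(v)) = \iota_v \, {\rm d}\theta$ since $\theta(v) = 0$, so $\iota_v\, {\rm d}\theta = \lambda \theta$. Pairing with any $u \in D$ and using $\theta(u) = 0$ yields ${\rm d}\theta(v, u) = 0$ for all $u \in D$; nondegeneracy of ${\rm d}\theta|_D$ then forces $v = 0$. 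For surjectivity, I would work locally: by Darboux, a neighborhood of any point in $M$ is contactomorphic to a neighborhood of $0 \in \C^{2m+1}$ equipped with the standard form of Notation~\ref{n.standard}, and one trivializes $L$ by the generator dual to $\theta$, so that local sections of $L$ correspond to holomorphic functions. Theorem~\ref{t.Kobayashi}(i) and (v) then supply an infinitesimal contactomorphism $v^f$ with $\theta(v^f) = f$ for each such $f$. The local lifts produced on overlapping Darboux charts must coincide by the already established injectivity, hence they glue to a global section realizing $\eta^{-1}$.

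For (ii), the Legendrian condition $T_S \subset D|_S$ gives the exact sequence
\[ 0 \longrightarrow D|_S / T_S \longrightarrow N_S \longrightarrow \sL \longrightarrow 0. \]
The Frobenius bracket $\sigma: \wedge^2 D \to L$ is nondegenerate by definition of the contact structure, and the Legendrian condition forces $T_S \subset D|_S$ to be Lagrangian with respect to $\sigma$; therefore the assignment $v \mapsto \bigl(w \mapsto \sigma(v,w)\bigr)$ induces an isomorphism $D|_S / T_S \cong T^*_S \otimes \sL$. This refines the previous sequence to
\[ 0 \longrightarrow T^*_S \otimes \sL \longrightarrow N_S \longrightarrow \sL \longrightarrow 0, \]
matching the defining extension of $J^1_S \sL$ from Lemma~\ref{l.jet}. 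To produce the isomorphism $\zeta$ itself compatibly with $\vartheta|_S$, I would invoke the classical Legendrian tubular neighborhood theorem (a Darboux--Moser--Weinstein-type result): a sufficiently small neighborhood of $S$ in $(M,D)$ is contactomorphic to a neighborhood of the zero section in $J^1_S \sL$ equipped with the contact structure of Lemma~\ref{l.jet}, via a contactomorphism sending $S$ to the zero section and respecting $L$. Differentiating this contactomorphism along $S$, and using that the normal bundle to the zero section of a vector bundle is canonically the bundle itself, yields the desired bundle isomorphism $\zeta: N_S \cong J^1_S \sL$; the equality $j \circ \zeta = \vartheta|_S$ is automatic because the tubular contactomorphism preserves $L$.

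The main obstacle is the Legendrian tubular neighborhood theorem. A direct route is via a Moser-type homotopy argument along $S$, interpolating between two contact structures on a neighborhood of $S$ in $N_S$ that induce the same infinitesimal data; this is closely analogous to the techniques developed in Section~\ref{s.normal}, and in fact part~(i) above is exactly the ingredient needed to integrate the interpolating vector field to a family of contactomorphisms. Alternatively, one can avoid tubular neighborhoods entirely by computing both extension classes in ${\rm Ext}^1_{\sO_S}(\sL, T^*_S \otimes \sL)$ and verifying that both coincide with the Atiyah class of $\sL$, which guarantees the existence of $\zeta$; the compatibilities with $\vartheta|_S$ and with $\sigma$ then pin $\zeta$ down canonically.
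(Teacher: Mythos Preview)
The paper does not actually prove this lemma: it records it as classical and simply refers to Theorem~7.1 of \cite{Kb} for (i) and Lemma~7.1 of \cite{LB} for (ii). So there is no in-paper argument to compare against beyond those citations.

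Your argument for (i) is correct and is the standard one; in particular your local surjectivity step is exactly Theorem~\ref{t.Kobayashi}(i),(v), which is the computation behind the cited result in \cite{Kb}, and your injectivity argument via nondegeneracy of ${\rm d}\theta|_D$ is the usual one. Your outline for (ii) also isolates the right short exact sequence, and the identification $D|_S/T_S \cong T^*_S \otimes \sL$ via the bracket is precisely the mechanism in \cite{LB}. One caution: the \emph{global} holomorphic Legendrian tubular neighborhood statement you invoke is not available in general (unlike the $C^\infty$ case, there are cohomological obstructions to promoting a first-order identification along $S$ to an honest contactomorphism of neighborhoods), so of your two proposed routes to $\zeta$ the extension-class comparison is the reliable one. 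A cleaner direct construction, closer to \cite{LB}, is to send the $1$-jet at $x\in S$ of a local section $s$ of $\sL$ to the class in $N_{S,x}$ of $\eta^{-1}(\tilde s)(x)$ for any local extension $\tilde s\in H^0(U,L)$; the explicit formula for $v^f$ in Theorem~\ref{t.Kobayashi} shows this depends only on the $1$-jet of $s$ along $S$ and yields an isomorphism satisfying $j\circ\zeta=\vartheta|_S$. Note also that the paper does use the \emph{local} tubular-neighborhood picture (via Theorem~\ref{t.relDarboux}) in the proof of Proposition~\ref{p.contact2}, which is consistent with your first approach once restricted to small open sets.
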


          \begin{proposition}\label{p.contact2} Let us use the terminology of Lemma \ref{l.contact1}.
            \begin{itemize} \item[\rm (1)] Let $\{ S_t \subset M, t \in \Delta\}$ be a family of Legendrian submanifolds of $M$ with $S_0 = S$, parametrized by  a neighborhood $\Delta \subset \C$ of $0$. Let $$\vartheta (\dot{S}_0) \  \in \ H^0(S, \sL)$$ be the image of  the infinitesimal deformation $\dot{S}_0  \in H^0(S, N_{S})$ under the map induced by $\vartheta$, and
     let
      $$\zeta(\dot{S}_0) \
      \in \  H^0(S, J^1_{S} \sL)$$ be the image of $\dot{S}_0$ under the isomorphism $\zeta: N_S \cong J^1_S \sL$ of Lemma \ref{l.contact1} {\rm (ii)}.  Then $\zeta(\dot{S}_0)$ is  the 1-jet of $\vartheta (\dot{S}_0)$.
      \item[\rm (2)] For an element $\vec{A} \in \cont(M,D)$, let $$\eta(\vec{A})|_S \  \in \ H^0(S, \sL)$$ be the restriction of its image under $\eta$ in Lemma \ref{l.contact1} {\rm (i)}, and let
$$\zeta(\vec{A}) \ \in \ H^0(S, J^1_S \sL)$$ be the image of the element  $$(\vec{A} \mbox{ modulo } T_S ) \ \in \ H^0(S, N_S) $$ under $\zeta$ in Lemma \ref{l.contact1} {\rm (ii)}. Then $\zeta(\vec{A})$ is  the 1-jet of $\eta (\vec{A})|_S.$
      \end{itemize} \end{proposition}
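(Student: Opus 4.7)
The plan is to derive (2) from (1) by integrating the flow of $\vec A$, and to prove (1) via a tubular-neighborhood interpretation of Lemma \ref{l.contact1}(ii). I read that lemma as saying that the natural isomorphism $\zeta$ is induced by a germ of contactomorphism between a neighborhood of $S \subset M$ and a neighborhood of the zero section $S \subset J^1_S\sL$ (with the contact structure of Lemma \ref{l.jet}), so that a section $\nu \in H^0(S, N_S)$ is sent by $\zeta$ to the derivative at $t=0$ of any holomorphic family of sections $S_t^\nu \subset J^1_S\sL$ of $J^1_S\sL \to S$ with $S_0^\nu = S$ and speed $\nu$.

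For (1), I would shrink $\Delta$ so that the Legendrian family $\{S_t\}$ lies inside such a tubular neighborhood and, in the model $J^1_S\sL$, consists of graphical sections of $J^1_S\sL \to S$. Since each $S_t$ is Legendrian in $J^1_S\sL$, Lemma \ref{l.jet} produces a unique $\sigma_t \in H^0(S,\sL)$ with $S_t = j^1(\sigma_t)$ and $\sigma_0 = 0$. Differentiating at $t=0$ would yield
\begin{equation*}
\zeta(\dot{S}_0) \;=\; \left.\frac{d}{dt}\right|_{t=0} j^1(\sigma_t) \;=\; j^1(\dot\sigma_0),
\end{equation*}
and applying $j$ and using the identity $j\circ\zeta = \vartheta|_S$ from Lemma \ref{l.contact1}(ii) would force $\dot\sigma_0 = \vartheta(\dot S_0)$, giving (1).

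For (2), I would use Lemma \ref{l.time-indep} to integrate $\vec A$ to a local flow $\Phi^{\vec A}_t$; Definition \ref{d.preserve} guarantees that each $\Phi^{\vec A}_t$ is a contactomorphism, so $S_t := \Phi^{\vec A}_t(S)$ is a holomorphic family of Legendrian submanifolds with $S_0 = S$ whose infinitesimal deformation is $\dot S_0 = \vec A \bmod T_S$. Part (1) would then give $\zeta(\vec A \bmod T_S) = j^1(\vartheta(\dot S_0))$, and since $T_S \subset D|_S = \Ker(\vartheta|_S)$, the image $\vartheta(\dot S_0)$ coincides with $\vartheta(\vec A)|_S$, which by the definition of $\eta$ in Lemma \ref{l.contact1}(i) is $\eta(\vec A)|_S$; this is exactly (2).

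The main obstacle is pinning down the ``natural'' isomorphism $\zeta$: the bare identity $j \circ \zeta = \vartheta|_S$ leaves $\zeta$ undetermined on the kernel subbundle $D|_S/T_S \subset N_S$, so the argument above implicitly relies on $\zeta$ being the one induced by a genuine contactomorphism of tubular neighborhoods. If one prefers to avoid this input, I would verify (1) directly in Darboux coordinates: take $S = \{x_{m+1} = \cdots = x_{2m} = x_{2m+1} = 0\}$, parametrize a Legendrian deformation by functions $f_i, g$ of $(x_1,\dots,x_m,t)$, extract from $\theta|_{S_t} = 0$ the infinitesimal relation $\dot f_i = \partial \dot g/\partial x_i$, and recognize the resulting expression of $\dot S_0$ as the 1-jet of $\dot g = \vartheta(\dot S_0)$.
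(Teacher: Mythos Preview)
Your proposal is correct and follows essentially the same route as the paper: for (1) the paper also passes to a contactomorphic tubular neighborhood $U^M \cong U^N \subset N_S \cong J^1_S\sL$, reads the Legendrian family as a family of sections $s_t$ that must be $1$-jets by Lemma~\ref{l.jet}, and differentiates; for (2) the paper likewise applies (1) to the family $\Phi^{\vec A}_t(S)$. The ``obstacle'' you flag is exactly the step the paper handles by invoking the relative Darboux theorem (Theorem~\ref{t.relDarboux}, i.e.\ Theorem~A in Ch.~4, \S1.3 of \cite{AG}) to make the identification $U^M \cong U^N$ contactomorphic, so your reading of $\zeta$ is the intended one and no separate coordinate computation is needed.
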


\begin{proof}
For (1), we compute the infinitesimal deformation  $\dot{S}_0$  in a neighborhood
of $u \in S$ in the following way.
  Choose a biholomorphic map between a neighborhood $U^M \subset M$ of $u$ in $M$ and a neighborhood $U^N$ of $u$ in the normal bundle $N_S$ such that $S \cap U^M$ corresponds to the zero section of $N_{S \cap U^M}$ in $U^N$. We can shrink $\Delta$ and $U^M$, if necessary, so that the family of submanifolds $\{S_t, t \in \Delta\}$ gives a family of sections $s_t$ of the normal bundle $N_{S \cap U^M}$. Then $$\frac{{\rm d} s_t}{{\rm d} t}|_{t=0}
\in H^0(S \cap U^M, N_{S\cap U^M})$$ gives the value of $\dot{S}_{0}$ at $u$.
This is because we can choose $G$ in Definition \ref{d.Kodaira2} to respect the bundle structure on $U^M$ induced by the bundle structure of $N_{S}$.

Now by the relative Darboux Theorem (Theorem \ref{t.relDarboux} or  Theorem A in Chapter 4, Section 1.3  of \cite{AG}),  we may choose the biholomorphic map $U^M \cong U^N$ to be contactomorphic
with respect to the contact structure $D$ on $U^M \subset M$ and the contact structure on $U^N$ defined by $\zeta$ in  (ii) of Lemma \ref{l.contact1} combined with Lemma \ref{l.jet}. Then the sections $\{s_t, t \in \Delta\}$ corresponds to Legendrian sections of $N_S \cong J^1_S \sL$. Thus they are the 1-jets of sections $$\vartheta(s_t) = j \circ \zeta (s_t) \ \mbox{ of } \sL|_{S \cap U^M},$$ by Lemma \ref{l.jet}. Consequently, the derivative $ \frac{{\rm d} s_t}{{\rm d} t}|_{t=0},$ which represents $\dot{S}_0$,  is  the 1-jet of the section $$\vartheta(\frac{\partial s_t}{\partial t}|_{t=0}) = j \circ \zeta (\frac{\partial s_t}{\partial t}|_{t=0}) \ \mbox{ of } \sL|_{S \cap U^M}.$$  This proves (1).

(2) follows from (1) by considering the deformation of $S$ induced by the local contactomorphisms $\Phi^{\vec{A}}$ generated by $\vec{A}$ in the sense of Definition \ref{d.preserve}.
\end{proof}

\begin{proof}[Proof of Theorem \ref{t.normal}]
As the problem is local, we  replace $M$ by a Stein neighborhood of  $ x_0 \in Z_0$ in $M$
and assume that $M$ is Stein. Set $L = T_M/D$.

Let $S_t \subset Z_t$ be the smooth locus of the Legendrian subvariety $Z_t$. For each $ t \in \Delta$, we have the infinitesimal deformation $\dot{S}_{t}  \in H^0(S_t, N_{S_t})$
and the corresponding element $$\vartheta(\dot{S}_{t}) \in H^0(S_{t}, L|_{S_{t}})$$ from Proposition \ref{p.contact2} (1). As $Z_{t}$ is a normal variety, we have the extension
$a_{t} \in H^0(Z_{t}, L|_{Z_{t}})$ for each $t \in \Delta$ such that
$a_{t}|_{S_{t}} = \vartheta(\dot{S}_{t})$. Since $H^1(M, L \otimes \sI) =0$ for any ideal sheaf $\sI$ by our assumption that $M$ is Stein, we have
$b_t \in H^0(M, L)$ for each $t \in \Delta$ such that $a_t = b_t|_{Z_t}$.

We have $\vec{B}_t \in \cont(M,D)$ satisfying $\eta(\vec{B}_t)= b_t$ from Lemma \ref{l.contact1} (i) for each $t \in \Delta$.
The family of vector fields $\{ \vec{B}_t, t \in \Delta \}$ defines a vector field $\vec{B}$ on $M \times \Delta$ satisfying
${\rm d} \pi^{\Delta} (\vec{B}) =0$.
 Applying Lemma \ref{l.time-dep} to the vector field $\vec{B}$ on $M \times \Delta$ at the point
$x_0 \in M$, we obtain  $\Psi^{\vec{B}} : U^{x_0} \times \Delta^{x_0} \to M \times \Delta^{x_0}$ for some neighborhoods $x_0 \in U^{x_0} \subset M$ and $0 \in \Delta^{x_0} \subset \Delta$ such that  $\Psi^{\vec{B}}_t : U^{x_0} \to M$ is a contactomorphism over its image for each $t \in \Delta^{x_0}$.

We claim that $$ \dot{S}_t (y) = \vec{B}_t (y) \mbox{ modulo } T_{S_t, y}$$ for each $t \in \Delta$ and $y \in S_t$. By the isomorphism $ \zeta$ in Lemma \ref{l.contact1} (ii), we may check that   $ \zeta(\vec{B}_t|_{S_t}) = \zeta(\dot{S}_t) $ using the notation of Proposition \ref{p.contact2} (1). But  $\zeta(\dot{S}_t) $ is the 1-jet of $\vartheta (\dot{S}_t)$ by  Proposition \ref{p.contact2} (1) and  $ \zeta(\vec{B}_t|_{S_t})$ is the 1-jet of $\eta (\vec{B}_t|_{S_t})$ by  Proposition \ref{p.contact2} (2). Since
$$\eta(\vec{B}_t|_{S_t}) = b_t|_{S_t} = a_{t}|_{S_{t}} = \vartheta(\dot{S}_{t}),$$
we obtain  $ \zeta(\vec{B}_t|_{S_t}) =  \vartheta (\dot{S}_t)$. This proves the claim.

By the claim, we can apply Lemma \ref{l.submanifold} to conclude $\Psi^{\vec{B}}_t$ sends an open subset in $S_0$ into an open subset in $S_t$ for any $t$ close to $0$.  Consequently, the contactomorphism $\Psi^{\vec{B}}_t$ sends the germ of $Z_0$ at $x_0$ into  $Z_t$ for $t$ close to $0$. Setting $x_t = \Psi^{\vec{B}}_t(x_0)$, we obtain
    Theorem \ref{t.normal}. \end{proof}

\bibliographymark{References}

\providecommand{\bysame}{\leavevmode\hbox to3em{\hrulefill}\thinspace}
\providecommand{\arXiv}[2][]{\href{https://arxiv.org/abs/#2}{arXiv:#1#2}}
\providecommand{\MR}{\relax\ifhmode\unskip\space\fi MR }
% \MRhref is called by the amsart/book/proc definition of \MR.
\providecommand{\MRhref}[2]{%
  \href{http://www.ams.org/mathscinet-getitem?mr=#1}{#2}
}
\providecommand{\href}[2]{#2}

\end{document}